\documentclass[12pt,a4paper]{amsart}
\usepackage{amsfonts}
\usepackage[top=35mm, bottom=35mm, left=30mm, right=30mm]{geometry}
\usepackage[colorlinks=true,citecolor=blue]{hyperref}
\usepackage{mathptmx}
\usepackage{eucal}
\usepackage{graphicx}
\usepackage{amssymb}
\usepackage{amsmath}
\usepackage{amsthm}
\usepackage{enumerate}
\usepackage{verbatim}

\usepackage{xcolor}
\newtheorem{thm}{Theorem}[section]
\newtheorem{cor}[thm]{Corollary}
\newtheorem{lem}[thm]{Lemma}
\newtheorem{prop}[thm]{Proposition}

\theoremstyle{definition}
\newtheorem{defn}[thm]{Definition}

\numberwithin{equation}{section}

\newcommand{\N}{\mathbb{N}}

\newcommand{\Z}{\mathbb{Z}}

\newcommand{\Orb}{\mathrm{Orb}}

\DeclareMathOperator{\supp}{supp}
\DeclareMathOperator{\diam}{diam}

\newcommand{\ep}{\epsilon}

\newcommand{\ra}{\rightarrow}

\def \B {\mathcal B}

\begin{document}

\title{Relative mean sensitivity and relative entropy under group actions}
\author[W. Ouyang]{Wenqi Ouyang}

\address[Wenqi Ouyang]{Department of Mathematics, Shantou University, Shantou 515063, P. R. China}
\email{}

\begin{abstract}
In this paper we introduce relative mean sensitivity
both in topological and measure-theoretical settings under group actions.
It turns out that relative positive entropy is relative mean $n$-sensitive between minimal systems;
relative positive $\mu$-entropy for an ergodic measure $\mu$ is relative mean $\mu$-$n$-sensitive.
\end{abstract}

\subjclass[2010]{37A35, 37B40}
\keywords{Relative entropy; relative mean sensitivity}

\maketitle
\section{Introduction}

By a $\mathbb{Z}$-action {\it topological dynamical system} ({\it TDS} for short) we mean a pair $(X,T)$,
where $X$ is a compact metric space with a metric $d$ and $T$ is a homeomorphism from $X$ to itself.
Denote by  $\B_X$ all Borel measurable subsets of $X$. A Borel (probability) measure $\mu$ on $X$ is called $T$-\textit{invariant} if  $\mu(T^{-1}A)=\mu(A)$ for any $A\in \mathcal{B}_X$.
A $T$-invariant  measure $\mu$ on $X$ is called \textit{ergodic} if  $B\in \mathcal{B}_X$
with $T^{-1}B=B$ implies $\mu(B)=0$ or $\mu(B)=1$.
Each $T$-invariant  measure induces a $\mathbb{Z}$-action
{\it measure-preserving system} ({\it MPS} for short)  $(X,\B_X,\mu, T)$.

In 1958 Kolmogorov \cite{K58} associated to any MPS an isomorphism invariant, namely
the measure-theoretical entropy. Later on Adler et al \cite{AKM65} introduced topological entropy  in any TDS.
Sensitivity
is one of the most fundamental concepts in dynamical theory.
Recall that a TDS $(X,T)$ is sensitive if there is $\delta>0$  such that
for any non-empty open subset $U$ of $X$, there are $x_1,x_2\in U$ and $m\in \Z_+$ such that $d(T^{m}x_1,T^{m}x_2)>\delta$.
In \cite{X05} Xiong introduced a strong form of sensitivity named $n$-sensitivity.
Ye and Zhang \cite{YZ08} showed that
transitive TDS with positive entropy is $n$-sensitive.
Parallelly in \cite{HLY11} Huang, Lu and Ye studied the above notions in MPS $(X,\B_X,\mu, T)$
by introducing the notions of $\mu$-$n$-sensitivity.
They showed that
 ergodic MPS with positive entropy is $\mu$-$n$-sensitive.

In \cite{LTY15} Li, Tu and Ye introduced the notion of mean sensitivity
and showed that a minimal TDS with positive entropy is mean sensitive.
Li, Ye and Yu \cite{LYY22, LY21} studied mean $n$-sensitivity and show that
 a minimal TDS with positive entropy is mean $n$-sensitive.
The parallel notion of mean sensitivity for an ergodic MPS $(X,\B_X,\mu, T)$ was studied by Garc\'{\i}a-Ramos \cite{G17}.
In \cite{L16} Li studied mean $\mu$-$n$-sensitivity
and showed that positive $\mu$-entropy ergodic MPS is
mean $\mu$-$n$-sensitive.

For a factor map between two TDSs, Ledrappier and Walters \cite{LW77}
introduced notions of relative
topological entropy and relative measure-theoretical entropy
for a measure $\mu$.
Xu and Yu \cite{XY2026} introduced relative mean sensitivity
both in topological and measure-theoretical settings.
They showed that
relative positive entropy is relative mean $n$-sensitive between minimal systems;
relative positive $\mu$-entropy for an ergodic measure $\mu$ is relative mean $\mu$-$n$-sensitive.

Comparing to dynamical systems of $\mathbb{Z}$-action, the level of development of dynamical systems
of an amenable group action lagged behind.
Ornstein and Weiss \cite{OW87} laid a foundation of an amenable group action in 1987.
Huang, Ye and Zhang \cite{HYZ07} studied entropy theory for a countable discrete amenable
group action both in topological and measure-theoretical settings.
Dooley and Zhang \cite{DZ15} studied
relative entropy theory for a countable discrete amenable
group action both in topological and measure-theoretical settings.
In this paper we will introduce relative mean sensitivity
both in topological and measure-theoretical settings under group actions.
It turns out that relative positive entropy is relative mean $n$-sensitive between minimal systems;
relative positive $\mu$-entropy for an ergodic measure $\mu$ is relative mean $\mu$-$n$-sensitive.

This paper is organized as follows. In Section \ref{sect:preliminaries} we introduce some basic notions and facts that we will use later.  In Section \ref{sect:Ergodic decompositions}, we study ergodic decompositions of  conditional entropy.
Sections \ref{sect:MS-tuple:measure} and \ref{sect:MS-tuple:topological} are devoted to the study of
relative mean sensitivity and relative entropy in measure-theoretical and topological settings respectively.

\section{Preliminaries}\label{sect:preliminaries}
%In this section we introduce some basic notions and facts that we will use later.
Throughout the paper, denote $\mathbb{N}$ and ${\mathbb{Z}}_{+}$ by the
collection of natural numbers and non-negative integers respectively.

A countable discrete
group $G$ is said to be {\it amenable} if there exists a
sequence of finite non-empty subsets $F_n\subset  G$ such that for every
$g \in G$,
$\lim_{n\rightarrow +\infty} \frac{|gF_n\Delta F_n|}{|F_n|}=0$ and $\lim_{n\rightarrow +\infty} \frac{|F_ng\Delta F_n|}{|F_n|}=0$,
and in this case the sequence $\{F_n\}_{n=1}^\infty$  is called a {\it (two-sided)
F{\o}lner sequence}.
From now on, we will fix $G$ to be a countable discrete infinite amenable group.

By a {\it $G$-system} $(X, G)$ we
mean that $X$ is a compact metric space and $\Gamma: G\times
X\rightarrow X, (g, x)\mapsto g x$ is a continuous mapping
 satisfying
\begin{enumerate}

\item $\Gamma (e_G, x)= x$ for each $x\in X$ where $e_G$ is the unit element of $G$,

\item $\Gamma (g_1, \Gamma (g_2, x))= \Gamma (g_1 g_2, x)$ for each
$g_1, g_2\in G$ and $x\in X$.
\end{enumerate}

Let $\mathcal{F}=\{F_m\}_{m=1}^\infty$ to be a F{\o}lner sequence of $G$.
For $F\subseteq G$, we define the \emph{upper density} of $F$ with respect to $\mathcal{F}$ by
$$
\overline{D}_{\mathcal{F}}(F)=\limsup_{m\to\infty} \frac{|F\cap F_m|}{|F_m|}.
$$
Similarly we define the \emph{lower density} of $F$ with respect to $\mathcal{F}$ by
$$
\overline{D}_{\mathcal{F}}(F)=\liminf_{m\to\infty} \frac{|F\cap F_m|}{|F_m|}.
$$
If $\overline{D}_{\mathcal{F}}(F)=\underline{D}_{\mathcal{F}}(F)$,
we denote the common value by $D_{\mathcal{F}}(F)$ and call it the \emph{density} of $F$
with respect to $\mathcal{F}$.

\begin{lem}\label{20260620}
Let $A\subset G$ and $g\in G$, then $\overline{D}_{\mathcal{F}}(A)=\overline{D}_{\mathcal{F}}(Ag)$.
\end{lem}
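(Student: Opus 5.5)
We plan to compare $|Ag\cap F_m|$ with $|A\cap F_m|$ directly, using right translation invariance of the F{\o}lner sequence. Right multiplication by $g$ is a bijection of $G$, so
$$|Ag\cap F_m| = |(A\cap F_m g^{-1})g| = |A\cap F_m g^{-1}|.$$
It therefore suffices to show that $\bigl||A\cap F_mg^{-1}| - |A\cap F_m|\bigr|/|F_m|\to 0$ as $m\to\infty$.

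The key estimate is that, for any sets $A,B,C$, we have $\bigl||A\cap B|-|A\cap C|\bigr|\le |B\Delta C|$. Applying this with $B=F_mg^{-1}$ and $C=F_m$ gives
$$\frac{\bigl||Ag\cap F_m|-|A\cap F_m|\bigr|}{|F_m|}\le \frac{|F_mg^{-1}\Delta F_m|}{|F_m|},$$
and the right-hand side tends to $0$ by the two-sided F{\o}lner condition applied to the element $g^{-1}\in G$.

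Hence the two sequences $|Ag\cap F_m|/|F_m|$ and $|A\cap F_m|/|F_m|$ differ by a null sequence, so their $\limsup$'s coincide, which gives $\overline{D}_{\mathcal{F}}(A)=\overline{D}_{\mathcal{F}}(Ag)$. The same argument also shows that the lower densities agree.

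There is no real obstacle here. The only point needing care is to use the right-invariance condition $|F_mg\Delta F_m|/|F_m|\to 0$ rather than the left one, and to apply it to $g^{-1}$ instead of $g$. Both conditions were built into the definition of a two-sided F{\o}lner sequence, and both hold for every element of $G$, so this is immediate.
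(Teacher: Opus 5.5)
Your proof is correct and is essentially the paper's argument: both rest on the fact that right multiplication by $g$ is a bijection, combined with the right F{\o}lner condition. The paper compares $|Ag\cap F_m|$ with $|Ag\cap F_mg|=|A\cap F_m|$ via $|F_mg\Delta F_m|$, while you compare $|A\cap F_mg^{-1}|$ with $|A\cap F_m|$ via $|F_mg^{-1}\Delta F_m|$. These are the same quantity, since $|F_mg^{-1}\Delta F_m|=|F_m\Delta F_mg|$, and your explicit bound $\bigl||A\cap B|-|A\cap C|\bigr|\le|B\Delta C|$ makes rigorous a step the paper writes only loosely.
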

\begin{proof}
$\overline{D}_{\mathcal{F}}(Ag)=\limsup_{m\to\infty} \frac{|Ag\cap F_m|}{|F_m|}$

$=\limsup_{m\to\infty} \frac{|Ag\cap F_m|}{|F_m|}+\lim_{m\rightarrow +\infty} \frac{|F_mg\Delta F_m|}{|F_m|}$

$=\limsup_{m\to\infty} \frac{|Ag\cap F_mg|}{|F_m|}=\limsup_{m\to\infty} \frac{|A\cap F_m|}{|F_m|}=\overline{D}_{\mathcal{F}}(A)$
\end{proof}

\medskip
Let $(X, G)$ be a TDS and $x\in X$. We say that $\textrm{Orb}(x,G)=\{g x\colon g\in G\}$ is the \textit{orbit} of $x$.
Fix $n\in \N$, we denote by $X^{(n)}$  the $n$-fold product of $X$, by $\Delta_n(X)=\{(x,x,\dots, x)\in X^{(n)}\colon x\in X\}$  the diagonal of $ X^{(n)}$
and  $\Delta_n^\prime(X)=\{(x_1,x_2,\dots,x_n)\in X^{(n)}:  x_i=x_j  \text{ for some } 1\le i\neq j\le n \}$.

If a closed subset $Y\subseteq X$ is $G$-invariant in the sense of $GY\subseteq Y$,
then the restriction $(Y, G)$  is also a TDS, which is referred to be a \textit{subsystem} of $(X,G)$.

Let $x\in X$ and $U,V\subseteq X$. Define
$$
N(x,U)=\{g\in G \colon g x\in U\} \ \text{ and }\  N(U,V)=\{g\in G: U\cap g^{-1}V\neq\emptyset\}.
$$
A TDS $(X,G)$ is called \textit{transitive} if $N(U,V)\neq\emptyset$ for any  non-empty open subsets $U,V$ of $X$.

A point $x\in X$ is called a \textit{transitive point} if $\overline{\Orb(x,G)}=X$. It is known that a transitive TDS has a dense $G_\delta$ set of transitive points.  A TDS $(X,G)$ is called \textit{minimal} if all points in $X$ are transitive.

Let $(X, G)$ and $(Y,G)$ be two TDSs.
A  map $\pi\colon X\to Y$  is called a \textit{factor map}
if $\pi$ is continuous onto and $\pi\circ g=g\circ\pi$ for any $g\in G$,
and in which case $(Y,G)$ is often referred to be a \textit{factor} of $(X, G)$.
Put $R_{n,\pi} = \{(x_1,\dots,x_n) \in X^{(n)} : \pi(x_i)=\pi(x_j),\ \forall \ 1\le i, j \le n\}$ for $n\geq 2$.

\medskip
Let $(X,G)$ be a TDS. A (probability) measure $\mu$ on $X$ is called $G$-\textit{invariant}
if  $\mu(gA)=\mu(A)$ for any $A\in \mathcal{B}_X$ and any $g\in G$,
where $\mathcal{B}_X$ is the collection of all Borel sets in $X$.
By Krylov-Bogolioubov theorem there exists a $G$-invariant measure $\mu$ on $X$.
A $G$-invariant  measure $\mu$ on $X$ is called \textit{ergodic} if  $B\in \mathcal{B}_X$
with $gB=B$ for any $g\in G$ implies $\mu(B)=0$ or $\mu(B)=1$.
 Denote $M(X, G)$ (resp. $M^e(X, G)$) by the collection of all $G$-invariant measures (resp. all ergodic measures) on $X$.
For $\mu \in M(X,G)$,  the \textit{support} of $\mu$ is defined by
$\supp(\mu )=\{x\in X\colon \mu (U)>0\text{ for any neighbourhood }U\text{ of
}x\}$.

\subsection{Relative topological entropy}

Let $G$ be a countable discrete infinite amenable group and $F(G)$ the set of all finite non-empty subsets
of $G$.

 Let $f : F(G)\ra \mathbb{R}$ be a function. We say that $f$ is

(1) monotone, if $f(E)\le f ( F )$ for any $E , F \in F ( G )$ with $E \subset F$;

(2) non-negative, if $f ( F )\ge 0$ for any $F \in  F ( G )$;

(3) $G$-invariant, if $f ( Fg )= f (F)$ for any $F \in F (G)$ and $g \in  G$;

(4) sub-additive, if $f( E \cup F )\le f (E)+ f (F) $ for any $E, F \in F ( G )$;

The following limit theorem for invariant sub-additive functions
on finite subsets of amenable groups is due to Lindenstrauss and Weiss \cite{LW00}.
It plays a central role in the definition of some dynamical invariants
such as topological entropy and measure-theoretic entropy.

\begin{lem}\cite[Theorem 6.1]{LW00}\label{lem 2.1} Let $G$ be a countable amenable group.
Let $f : F(G)\ra \mathbb{R}$ be a function be a monotone non-negative $G$-invariant
sub-additive function. Then for any F{\o}lner sequence $\{ F_m : m \in \mathbb{N} \}$ of G, the sequence
$\{ \frac{f(F_m)}{|F_m|}
: m\in \mathbb{N} \}$ converges and the value of the limit is independent of the choice of
the F{\o}lner sequence $\{F_m : m \in \mathbb{N} \}$.
 \end{lem}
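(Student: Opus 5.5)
The plan is to prove the statement by the standard Ornstein--Weiss quasi-tiling argument. Let $\{F_m\}$ and $\{E_k\}$ be two arbitrary F{\o}lner sequences of $G$. It suffices to show
$$
\limsup_{m\to\infty}\frac{f(F_m)}{|F_m|}\le \liminf_{k\to\infty}\frac{f(E_k)}{|E_k|}.
$$
Taking $E_k=F_k$ gives convergence, and applying the inequality in both directions gives independence of the F{\o}lner sequence. Note that $f(F)\le |F|f(\{e_G\})$ by sub-additivity and $G$-invariance ($f(\{g\})=f(\{e_G\}g)=f(\{e_G\})$), so all the quotients are bounded and the limsup and liminf are finite.

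Set $L=\liminf_k f(E_k)/|E_k|$ and fix $\epsilon>0$. The key tool is the Ornstein--Weiss quasi-tiling lemma, stated for right translates since the invariance in (3) is $f(Fg)=f(F)$ and the F{\o}lner sequence is two-sided. It says the following. Given $\epsilon$ and any F{\o}lner sequence, one can choose finitely many tiles $T_1\subset T_2\subset\dots\subset T_N$ from that sequence, with indices as large as desired. Then every finite set $F$ that is sufficiently invariant (which holds for $F=F_m$, $m$ large) admits centers $C_1,\dots,C_N\subset G$ with the following properties:
\begin{itemize}
\item each translate $T_ic$ ($c\in C_i$) lies in $F$;
\item the family $\{T_ic\}$ is $\epsilon$-disjoint, meaning each $T_ic$ contains a subset $\widehat{T_ic}$ with $|\widehat{T_ic}|\ge(1-\epsilon)|T_i|$, and these subsets are pairwise disjoint;
\item the union of the translates covers at least $(1-\epsilon)|F|$ points of $F$.
\end{itemize}
Since $L$ is a liminf, there are infinitely many $k$ with $f(E_k)/|E_k|\le L+\epsilon$. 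I would therefore choose the tiles among these $E_k$, so that $f(T_i)\le (L+\epsilon)|T_i|$ for every $i$.

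With such a quasi-tiling of $F_m$, write $F_m=\bigcup_{i,c} T_ic\,\cup R$ with $|R|\le\epsilon|F_m|$. By monotonicity and sub-additivity,
$$
f(F_m)\le \sum_{i}\sum_{c\in C_i} f(T_ic)+f(R)\le (L+\epsilon)\sum_{i,c}|T_i| + \epsilon|F_m| f(\{e_G\}),
$$
using $G$-invariance for $f(T_ic)=f(T_i)$ and the bound $f(R)\le|R|f(\{e_G\})$. By $\epsilon$-disjointness, $\sum_{i,c}|T_i|\le (1-\epsilon)^{-1}\sum_{i,c}|\widehat{T_ic}|\le (1-\epsilon)^{-1}|F_m|$. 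Here non-negativity is used to make $L\ge 0$, so that this upper bound on $\sum|T_i|$ can be inserted. Dividing by $|F_m|$ and letting $m\to\infty$, then $\epsilon\to0$, gives the desired inequality.

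The main obstacle is the quasi-tiling lemma itself. I would prove it by the usual greedy construction:
\begin{itemize}
\item choose the tiles so that $T_{j+1}$ is very invariant under the finite set $T_jT_j^{-1}$;
\item inside $F$, pick translates of the largest tile $T_N$ one at a time, maximally subject to $\epsilon$-disjointness, and use a counting argument (F{\o}lner property plus maximality) to show they cover a definite fraction $\eta$ of what remains;
\item then pass to $T_{N-1},\dots,T_1$ on the uncovered part, which is still relatively invariant, each stage covering fraction $\eta$ of the remainder.
\end{itemize}
Choosing $N$ with $(1-\eta)^N<\epsilon$ yields coverage of at least $1-\epsilon$. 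Carrying the invariance estimates through this induction is the delicate part; the rest of the argument is routine.
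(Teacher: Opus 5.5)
Your proposal is correct and follows essentially the same approach: the paper gives no proof here and only quotes Lindenstrauss--Weiss, and your argument is the standard Ornstein--Weiss quasi-tiling proof of that result, with right translates $T_ic$ correctly paired with the invariance $f(Fg)=f(F)$ and with left F{\o}lner invariance. One small slip: members of an arbitrary F{\o}lner sequence need not be nested, but this is harmless, since you can either require $T_{j+1}$ to be very invariant under $\bigcup_{i\le j}T_iT_i^{-1}$ (rather than just $T_jT_j^{-1}$), or replace the tiles by suitable right translates, which changes neither $f(T_i)$ nor left invariance.
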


For a given  TDS
$(X, G)$,  a {\it Borel measurable cover} (short for {\it cover}) of $X$ is defined as a finite collection of Borel measurable subsets of $X$ such that their union equals $X$.  A cover is called an {\it open cover} if every element of which is an open subset of $X$,  and is termed a {\it partition} if all its elements are pairwise disjoint. By convention, we  use $\mathcal{U}, \mathcal{V},\dots$ to denote covers and $\alpha, \beta,\dots$ to denote partitions of a TDS, and use $\mathcal{C}_X$,  $\mathcal{C}_X^o$ and $\mathcal{P}_X$ to denote the collection of all covers, open covers and partitions, respectively.

Given $\mathcal{U}, \mathcal{V} \in  \mathcal{C}_X$, we say that
$\mathcal{U}$ is  {\it finer} than $\mathcal{V}$ (denote by $\mathcal{V}\preceq \mathcal{U}$),
if each element of $\mathcal{U}$ is a subset of some element of $\mathcal{V}$. Define the {\it joining} of  $\mathcal{U}$ and $\mathcal{V}$ by $\mathcal{U}\vee\mathcal{V}=\{U\cap V: U\in \mathcal{U}, V\in \mathcal{V}\}$. It is clear that $\mathcal{U}\preceq \mathcal{U}\vee\mathcal{V}$ and $\mathcal{V}\preceq\mathcal{U}\vee\mathcal{V}$.

Now let $\pi: (X, G)\ra (Y, G)$ be a factor map between TDSs  and $\mathcal{U} \in  \mathcal{C}_X^o$.
For $E \subseteq X$, we define $N(\mathcal{U}, E)$ by the minimum  among all the cardinalities of the subsets
of $\mathcal{U}$  whose union covers $E$, and define
$N(\mathcal{U}|\pi) = \sup_{y\in Y}N(\mathcal{U}, \pi^{-1}(y))$.
 Let $H(\mathcal{U}|\pi):= \log N(\mathcal{U}|\pi)$.

 It is easy to see that $F \in  F ( G )\ra H(\bigvee_{g\in F}g^{-1}\mathcal{U}|\pi)$ is a monotone non-negative $G$-invariant sub-additive function. Thus by Lemma \ref{lem 2.1}
 we can define the {\it relative topological entropy of $G$ with respect to $\mathcal{U}$, $\pi$} by
$$
h_{\text{top}}(G,\mathcal{U}|\pi) = \lim_{m\to \infty}\frac{1}{|F_m|}H(\bigvee_{g\in F_m}g^{-1}\mathcal{U}|\pi),
$$
 and it is independent of the choice of F{\o}lner sequences.
 Moreover,  the {\it relative topological entropy of $G$ with respect to  $\pi$} is defined by
 $$
h_{\text{top}}(G|\pi)=\sup_{\mathcal{U}\in \mathcal{C}_X^o} h_{\text{top}}(T,\mathcal{U}|\pi).
$$

Blanchard et al \cite{B1993, B1995} introduced the notions of entropy pairs and entropy $\mu$-pairs for a measure,
from then on entropy pairs have been intensely studied by many researchers.
Following these ideas, Huang and Ye \cite{HY06} introduced the notions
of entropy tuples and entropy $\mu$-$n$-tuples for a measure.
Huang, Ye and Zhang \cite{HYZ07} studied relative entropy $n$-tuples and relative entropy $\mu$-$n$-tuples.

For actions of amenable groups,
Huang, Ye and Zhang \cite{HYZ11} studied entropy tuples and entropy $\mu$-$n$-tuples for a measure,
Dooley and Zhang \cite{DZ15} studied relative entropy $n$-tuples and relative entropy $\mu$-$n$-tuples.

Let $(x_1,x_2,\dots,x_n)\in X^{(n)}$. A cover  $\mathcal{U}=\{U_1,U_2,\dots,U_k\}$ of $X$ is called an {\it admissible cover with respect to $(x_1,x_2,\dots,x_n)$} if for each $1\leq j\leq k$ there is $1\leq i_j\leq n$ such that $x_{i_j}\notin\overline{U_j}$.
\begin{defn}\cite{HYZ07}\label{d1}
Let $\pi: (X, G)\ra (Y, G)$ be a factor map between TDSs and $2\le n\in\N$.
An $n$-tuple $(x_1,x_2,\dots,x_n)\in X^{(n)}\setminus \Delta_n(X)$ is called a {\it relative entropy $n$-tuple}
if for any admissible open cover $\mathcal{U}$ with respect to $(x_1,x_2,\dots,x_n)$,
we have $h_{\text{top}}(G,\mathcal{U}|\pi)>0$. If in addition, $x_i\neq x_j$ for each $1\leq i\neq j\leq n$,
we call it an {\it essential relative entropy $n$-tuple}.
\end{defn}	
We denote by $E_{n,\pi}(X,G)$ and $E_{n,\pi}^{e}(X,G)$ the set of all relative entropy $n$-tuples and the set of all essential relative entropy $n$-tuples, respectively.

\subsection{Relative entropy for a measure}
Given $\alpha\in \mathcal{P}_X$, $\mu\in
M(X,T)$ and a sub-$\sigma$-algebra
$\mathcal{C}\subseteq\mathcal{B}_X$,
let
\begin{equation*}
H_{\mu}(\alpha)=\sum_{A\in \alpha}-\mu(A)\log \mu(A)\,\,\text{and}\,\,
H_{\mu}(\alpha\mid\mathcal{C})=\sum_{A\in\alpha}\int_X-\mathbb{E}(1_A\mid\mathcal{C})\log
\mathbb{E}(1_A\mid\mathcal{C})\mathrm{d}\mu,
\end{equation*}
where $\mathbb{E}(1_A\mid\mathcal{C})$ represents the conditional expectation of the characteristic function $ 1_A$
with respect to $\mathcal{C}$.

Let $\pi: (X, G)\ra (Y, G)$ be a factor map between TDSs. It is easy to see that
 $\pi^{-1}\mathcal{B}_Y$ is a $G$-invariant sub-$\sigma$-algebra of
$\mathcal{B}_X$. For simplicity  we set  $H_\mu(\alpha|\pi)=H_\mu(\alpha|\pi^{-1}\mathcal{B}_Y)$.

It is easy to see that $F \in  F (G)\ra H(\bigvee_{g\in F}g^{-1}\alpha|\pi)$ is a monotone nonnegative $G$-invariant sub-additive function. Thus by Lemma \ref{lem 2.1}
We define the {\it relative $\mu$-measure  entropy of $G$ with respect to $\alpha$, $\pi$} by
$$h_{\mu}(G,\alpha|\pi) = \lim_{m\to \infty}\frac{1}{|F_m|}H(\bigvee_{g\in F_m}g^{-1}\alpha|\pi),.$$
The {\it relative $\mu$-measure  entropy of $G$ with respect to $\pi$}
is given by
$$h_{\mu}(G|\pi)=\sup_{\alpha \in \mathcal{P}_X} h_{\mu}(T,\alpha|\pi). $$

Let $(x_1,x_2,\dots,x_n)\in X^{(n)}$. A partition  $\alpha=\{A_1,A_2,\dots,A_k\}$ of $X$ is called an {\it admissible partition with respect to $(x_1,x_2,\dots,x_n)$} if for each $1\leq j\leq k$ there is $1\leq i_j\leq n$ such that $x_{i_j}\notin\overline{A_j}$.

\begin{defn}\cite{HYZ07}
Let $\pi: (X, G)\ra (Y, G)$ be a factor map between TDSs,  $\mu \in M(X,G)$ and $2\le n\in\N$.
An $n$-tuple $(x_1,x_2,\dots,x_n)\in X^{(n)}\setminus \Delta_n(X)$ is called a {\it relative entropy $\mu$-$n$-tuple}
if for any admissible partition $\alpha$ with respect to $(x_1,x_2,\dots,x_n)$,
we have $h_{\mu}(G,\alpha|\pi)>0$. If in addition, $x_i\neq x_j$ for each $1\leq i\neq j\leq n$,
we call it an {\it essential relative entropy $\mu$-$n$-tuple}.
\end{defn}	
We denote by $E_{n,\pi}^{\mu}(X,G)$ and $E_{n,\pi}^{\mu,e}(X,G)$
the set of all relative entropy $\mu$-$n$-tuples and the set of all essential relative entropy $\mu$-$n$-tuples, respectively.

\medskip
Let $\pi: (X, G)\ra (Y, G)$ be a factor map between TDSs and  $\mu \in M(X,G)$.
Define the {\it relative Pinsker $\sigma$-algebra} by
$$P_\mu(\pi^{-1}\mathcal{B}_Y)=\{A\in \mathcal{B}_X: h_{\mu}(G,\{A,A^c\}|\pi)= 0\}.$$
It is easy to see that $\pi^{-1}\mathcal{B}_Y\subseteq P_\mu(\pi^{-1}\mathcal{B}_Y)$
and $P_\mu(\pi^{-1}\mathcal{B}_Y)$ is a $G$-invariant sub-$\sigma$-algebra of $\mathcal{B}_X$.

Define the measure $\lambda_{n,\pi}^\mu$ on $\mathcal{B}_X^{(n)}$ with respect to $\pi$ by $$\lambda_{n,\pi}^\mu(\prod_{i=1}^nA_i)
=\int_{X}\prod_{i=1}^n\mathbb{E}(1_{A_i}|\mathcal{P}_\mu(\pi^{-1}\mathcal{B}_Y))\mathrm{d}\mu$$
for every $\prod_{i=1}^nA_i\in \mathcal{B}_X^{(n)}$.
It is easy to see that $\lambda_{1,\pi}^\mu=\mu$.

\begin{lem}\cite[Proposition 13.5]{DZ15}\label{lem:tuple-supp}
Let $\pi: (X, G)\ra (Y, G)$ be a factor map between TDSs and  $\mu \in M(X,G)$.
Then for any $2\le n\in \N$, we have
$$E_{n,\pi}^{\mu}(X,G)=\supp(\lambda_{n,\pi}^\mu)\setminus \Delta_n(X).$$
\end{lem}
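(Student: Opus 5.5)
We would prove both inclusions of Lemma~\ref{lem:tuple-supp} by relating the positivity of $h_\mu(G,\alpha|\pi)$ for admissible partitions to the relative Pinsker $\sigma$-algebra $P_\mu(\pi^{-1}\mathcal{B}_Y)$. The key tool is the relative Pinsker formula for amenable group actions: for finite partitions $\alpha_1,\dots,\alpha_n$, $h_\mu(G,\alpha|\pi)=0$ if and only if $\alpha\subseteq P_\mu(\pi^{-1}\mathcal{B}_Y)$ (mod $\mu$). This follows from the definition of the Pinsker algebra together with subadditivity: a finite partition generated by sets of zero relative entropy has zero relative entropy. A second tool is the standard lemma that $h_\mu(G,\alpha|\pi)>0$ whenever $\alpha$ is not measurable with respect to $P_\mu(\pi^{-1}\mathcal{B}_Y)$.

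\emph{Step 1: $\supp(\lambda^\mu_{n,\pi})\setminus\Delta_n(X)\subseteq E^\mu_{n,\pi}(X,G)$.} Let $(x_1,\dots,x_n)$ lie in the support, off the diagonal, and let $\alpha=\{A_1,\dots,A_k\}$ be admissible. Assume, for contradiction, that $h_\mu(G,\alpha|\pi)=0$. Then each $A_j$ is in $P:=P_\mu(\pi^{-1}\mathcal{B}_Y)$, so $\mathbb{E}(1_{A_j}|P)=1_{A_j}$.

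Choose neighbourhoods $U_i\ni x_i$ such that each $A_j$ misses $U_{i_j}$; admissibility gives $x_{i_j}\notin\overline{A_j}$, which makes this possible. By the support condition, $\lambda^\mu_{n,\pi}(U_1\times\dots\times U_n)>0$. Expanding $1_{U_i}=\sum_j 1_{U_i\cap A_j}$, we get
\[
\lambda^\mu_{n,\pi}\Bigl(\prod_i U_i\Bigr)=\sum_j\int 1_{A_j}\prod_i\mathbb{E}(1_{U_i}|P)\,d\mu,
\]
since $1_{A_j}\mathbb{E}(1_{U_i}|P)=\mathbb{E}(1_{A_j\cap U_i}|P)$. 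For each $j$ the factor with $i=i_j$ is $\mathbb{E}(1_{A_j\cap U_{i_j}}|P)=0$, so every term vanishes. This is a contradiction.

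\emph{Step 2: $E^\mu_{n,\pi}(X,G)\subseteq\supp(\lambda^\mu_{n,\pi})$.} Suppose $(x_1,\dots,x_n)\notin\supp(\lambda^\mu_{n,\pi})$. Take closed neighbourhoods $\overline{U_i}$ with $\lambda(\prod\overline{U_i})=0$, chosen pairwise disjoint for distinct points. For coincident points we group indices and treat the shared neighbourhood once; this reduction is the delicate bookkeeping step.

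Then $\prod_i\mathbb{E}(1_{\overline{U_i}}|P)=0$ a.e. Hence $X$ splits $P$-measurably into sets $B_i=\{\mathbb{E}(1_{\overline{U_i}}|P)=0\}$ that cover $X$ up to a null set. Define the partition $\alpha$ consisting of the pieces $B_i\setminus\overline{U_i}$ (disjointified) together with leftovers. The aim is that $\alpha$ is admissible, since each piece avoids some $\overline{U_i}$ up to a null set, and that $h_\mu(G,\alpha|\pi)=0$, since $\alpha$ is essentially $P$-measurable.

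The main obstacle is making $\alpha$ genuinely admissible. The sets $B_i\setminus\overline{U_i}$ need not have closures avoiding $x_i$, and $\mu$-null modifications are not allowed to touch closures. To handle this we would follow Huang--Ye: approximate each $P$-measurable $B_i$ by a $P$-measurable set disjoint from a slightly smaller open neighbourhood $V_i\subset U_i$ of $x_i$. Specifically, replace $B_i$ by $B_i\setminus V_i'$ for smaller $V_i'$, and use $\mu(B_i\cap\overline{U_i})=0$ to control the error. Then the entropy bound $h_\mu(G,\alpha|\pi)\le h_\mu(G,\beta|\pi)+H_\mu(\alpha|\beta)$ gives arbitrarily small entropy. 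This yields $h=0$ once one uses Rokhlin-type continuity, exactly as in \cite{HYZ07}, adapted to Følner sequences via Lemma~\ref{lem 2.1}.
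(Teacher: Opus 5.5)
Your Step 1 is correct. It is the standard argument, namely the direction of the cover criterion (Lemma~\ref{etfm-cor-6}) saying that $\lambda_{n,\pi}^\mu(\prod_i U_i)>0$ forces positive relative entropy for every partition finer than $\{U_1^c,\dots,U_n^c\}$.

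Step~2 does not close as written, for two reasons.

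First, the obstacle you name is not real. If $A\cap U=\emptyset$ for an open $U\ni x$, then $\overline{A}\subseteq X\setminus U$. So any set contained in $X\setminus\overline{U_i}$ automatically has closure missing $x_i$. The actual issue is elsewhere: ``avoids $\overline{U_i}$ up to a null set'' is not admissibility. Moreover, leftover pieces kept as separate partition elements may have closures containing every $x_i$ (e.g.\ a dense null set).

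Second, your remedy is logically insufficient. Admissible partitions with \emph{arbitrarily small} relative entropy do not contradict $(x_1,\dots,x_n)\in E^\mu_{n,\pi}(X,G)$. The definition only requires $h_\mu(G,\alpha|\pi)>0$ for each admissible $\alpha$, with no uniform lower bound. Rokhlin continuity would give zero entropy only for a limit partition, and that limit need not be admissible. You need one admissible partition with entropy exactly $0$.

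The missing idea is simple and needs no approximation, and no bookkeeping for coincident coordinates either. Since the tuple is off the diagonal, give one pair with $x_{i_0}\neq x_{j_0}$ disjoint closed neighbourhoods. Then $\bigcap_i\overline{U_i}=\emptyset$, so the open sets $W_i=X\setminus\overline{U_i}$ cover $X$. Let $C_i=B_i\setminus\bigcup_{j<i}B_j$. Let $N=(X\setminus\bigcup_iC_i)\cup\bigcup_i(C_i\cap\overline{U_i})$; it is $\mu$-null because $\mu(B_i\cap\overline{U_i})=\int_{B_i}\mathbb{E}(1_{\overline{U_i}}|P)\,d\mu=0$. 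Now reassign $N$ exactly:
\[
A_i=(C_i\setminus\overline{U_i})\cup\Bigl(\bigl(N\cap W_i\bigr)\setminus\bigcup_{j<i}W_j\Bigr).
\]
Then $\alpha=\{A_1,\dots,A_n\}$ is a Borel partition with $A_i\subseteq W_i$. Hence $x_i\notin\overline{A_i}$ and $\alpha$ is admissible. Also $A_i=C_i$ mod $\mu$ with $C_i\in P$, so subadditivity gives $h_\mu(G,\alpha|\pi)\le\sum_i h_\mu(G,\{A_i,A_i^c\}|\pi)=0$, a contradiction.

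This is exactly the other direction of Lemma~\ref{etfm-cor-6}, applied to the open cover $\{W_1,\dots,W_n\}$. The paper gives no argument of its own and simply cites \cite{DZ15}. The standard proof obtains both inclusions of the lemma from that cover criterion, with your Step~1 as one half and the construction above as the other.
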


\begin{lem}\cite[Lemma 6.3]{Y2015}\label{bl}
Let $\pi: (X, \mathcal{B}_X,\mu, G)\ra (Y, \mathcal{B}_Y, \nu, G)$ be a factor map between TDSs and  $\mu \in M^e(X,G)$.
Let $\mu=\int_X\mu_xd\mu(x)$ be the disintegration of $\mu$ over $P_\mu(\pi^{-1}\mathcal{B}_Y)$.
If $h_\mu(G|\pi)>0$, then the following holds:
\begin{itemize}
\item[{\rm 1)}] $\mu_x$  is non-atomic for $\mu$-a.e. $x\in X$;

\item[{\rm 2)}] $\phi: (X, \mathcal{B}_X,\mu, G)\ra (X, P_\mu(\pi^{-1}\mathcal{B}_Y),\mu, G)$ is a weakly mixing extension, i.e.
$\lambda_{2,\pi}^\mu$ is ergodic.
\end{itemize}
\end{lem}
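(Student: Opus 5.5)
The statement is cited from \cite{Y2015}. I would prove the two items separately, using two standard facts. First, the disintegration $x\mapsto\mu_x$ over $P_\mu(\pi^{-1}\mathcal{B}_Y)$ is $P_\mu(\pi^{-1}\mathcal{B}_Y)$-measurable and $G$-equivariant: $\mu_{gx}=g_*\mu_x$ for $\mu$-a.e.\ $x$. Second, $\mu_y=\mu_x$ for $\mu_x$-a.e.\ $y$.

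For 1), consider the set
$$A=\{x\in X:\ \mu_x(\{x\})>0\}.$$
It is Borel, and by equivariance it is $G$-invariant mod $\mu$. Since $\mu$ is ergodic, $\mu(A)\in\{0,1\}$.

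By the second fact,
$$\mu(A)=\int_X \mu_x(A)\,d\mu(x)=\int_X \mu_x(\text{atoms of }\mu_x)\,d\mu(x).$$
So if $\mu(A)=0$, then $\mu_x$ is non-atomic for $\mu$-a.e.\ $x$, which is what we want.

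Suppose instead $\mu(A)=1$. Then $\mu_x$ is purely atomic for a.e.\ $x$. The atoms can be enumerated measurably (for instance by decreasing mass, breaking ties with a fixed Borel order on $X$). Hence every Borel set $B$ agrees mod $\mu$ with a $P_\mu(\pi^{-1}\mathcal{B}_Y)$-measurable set, that is, $\mathcal{B}_X=P_\mu(\pi^{-1}\mathcal{B}_Y)$ mod $\mu$. By the definition of the relative Pinsker algebra, every two-set partition $\{B,B^c\}$ then has $h_\mu(G,\{B,B^c\}|\pi)=0$. Any finite partition is coarser than a finite join of such two-set partitions, so subadditivity of $h_\mu(G,\cdot|\pi)$ under joins gives $h_\mu(G,\alpha|\pi)=0$ for all $\alpha\in\mathcal{P}_X$. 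Therefore $h_\mu(G|\pi)=0$, a contradiction.

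For 2), I would argue by contradiction through the Furstenberg--Zimmer structure theory for extensions of ergodic actions of countable amenable groups. Write $\mathcal{P}=P_\mu(\pi^{-1}\mathcal{B}_Y)$. The extension $X\to(X,\mathcal{P})$ is weakly mixing exactly when $\lambda_{2,\pi}^\mu=\mu\times_{\mathcal{P}}\mu$ is ergodic. If it is not ergodic, the dichotomy theorem gives a non-trivial intermediate $G$-invariant $\sigma$-algebra $\mathcal{Z}$ with $\mathcal{P}\subsetneq\mathcal{Z}\subseteq\mathcal{B}_X$ such that $\mathcal{Z}\to\mathcal{P}$ is a compact (isometric) extension. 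Concretely, a non-trivial invariant function of $\lambda_{2,\pi}^\mu$ produces a Hilbert–Schmidt kernel over $\mathcal{P}$, and its eigenfunctions span a finite-rank invariant module that generates $\mathcal{Z}$.

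Next I would invoke the fact that compact extensions have zero relative entropy, for actions of amenable groups (Ward–Zhang, Danilenko). This gives $h_\mu(G,\alpha|\mathcal{P})=0$ for every $\mathcal{Z}$-measurable partition $\alpha$. Combined with the Abramov–Rokhlin type formula for the tower $\pi^{-1}\mathcal{B}_Y\subseteq\mathcal{P}\subseteq\mathcal{Z}$, we get
$$h_\mu(G,\alpha|\pi)\le h_\mu(G,\alpha\vee\beta|\pi)\le h_\mu(G,\beta|\pi)+h_\mu(G,\alpha|\mathcal{P}),$$
where $\beta$ is a $\mathcal{P}$-measurable partition approximating the relevant information. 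Both terms on the right vanish, so every $\mathcal{Z}$-measurable set lies in $\mathcal{P}$, contradicting $\mathcal{Z}\neq\mathcal{P}$.

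The main obstacle is this second step. It needs the amenable-group versions of the Furstenberg–Zimmer dichotomy and of the zero relative entropy of compact extensions, and these I would quote rather than reprove. The hypothesis $h_\mu(G|\pi)>0$ is used only in 1); in 2) it just guarantees that $\mathcal{P}\neq\mathcal{B}_X$, so the extension is non-trivial.
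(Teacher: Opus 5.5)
The paper gives no proof of this lemma; it only cites \cite{Y2015}. So your argument has to stand on its own. Part 2) follows the standard outline: the relative Furstenberg--Zimmer dichotomy plus the vanishing of relative entropy for compact extensions. Part 1), however, has a genuine gap.

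\textbf{The gap in 1).} You claim that if $\mu_x$ is purely atomic for a.e.\ $x$, then enumerating the atoms measurably shows that every Borel set agrees mod $\mu$ with a $\mathcal{P}$-measurable set. This step uses nothing about $\mathcal{P}$ being the relative Pinsker algebra, and for a general $\sigma$-algebra it is false. Take $X=\{a,b\}$ with the uniform measure and the trivial $\sigma$-algebra. The fibre measure $\frac12(\delta_a+\delta_b)$ is purely atomic, yet $\{a\}$ is not measurable for the trivial algebra. More generally, any $N$-point skew-product extension is a counterexample. Your enumerating maps $a_k(x)$ are $\mathcal{P}$-measurable, but the index $k(x)$ with $x=a_{k(x)}(x)$ is not. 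So $1_B$ cannot be recovered from $\mathcal{P}$, and your contradiction in the case $\mu(A)=1$ is not established.

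\textbf{What is missing, and how to repair it.} You need the entropy input showing that such an atomic extension carries no relative entropy.
\begin{enumerate}
\item By equivariance and ergodicity, $x\mapsto\mu_x(\{x\})$ is a.e.\ equal to a constant $c>0$.
\item Since $\int_X\mu_x(\{y:\mu_x(\{y\})=c\})\,d\mu(x)=1$, each $\mu_x$ is uniform on $N=1/c$ points.
\item Hence $H_\mu(\bigvee_{g\in F}g^{-1}\alpha\,|\,\mathcal{P})\le\log N$ for every finite $F\subset G$, so $h_\mu(G,\alpha|\mathcal{P})=0$ for all $\alpha$.
\item To conclude $h_\mu(G|\pi)=0$, you still need the relative Pinsker property $h_\mu(G,\alpha|\pi)=h_\mu(G,\alpha|\mathcal{P})$. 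The inequality $h_\mu(G,\alpha|\pi)\ge h_\mu(G,\alpha|\mathcal{P})$ is trivial; the reverse is the real content.
\end{enumerate}
Alternatively, note that an $N$-point extension is compact, so your own argument from 2) forces $\mathcal{B}_X=\mathcal{P}$ mod $\mu$, hence $h_\mu(G|\pi)=0$. Either way, 1) rests on the same entropy machinery as 2), not on measurable selection alone.

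\textbf{A smaller point in 2).} Your displayed inequality does not hold for a fixed $\mathcal{P}$-measurable $\beta$. The addition formula produces the term $h_\mu(G,\alpha|\bigvee_{g\in G}g^{-1}\beta\vee\pi^{-1}\mathcal{B}_Y)$, and conditioning on this smaller $\sigma$-algebra gives something $\ge h_\mu(G,\alpha|\mathcal{P})$, not $\le$. You therefore need either a limit over partitions $\beta$ generating $\mathcal{P}$, using continuity of conditional entropy along increasing invariant $\sigma$-algebras, or to quote the identity $h_\mu(G,\alpha|\pi)=h_\mu(G,\alpha|\mathcal{P})$ directly.
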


\begin{lem}\label{lem:tuple-measure}
Let $\pi: (X, G)\ra (Y, G)$ be a factor map between TDSs and  $\mu \in M^e(X,G)$ with $h_{\mu}(G|\pi)>0$.
Then for any $n\ge 2$, $\lambda_{n,\pi}^\mu$ is ergodic and $\lambda_{n,\pi}^\mu(\Delta_n^\prime(X))=0$.
\end{lem}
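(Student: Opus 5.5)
We argue by induction on $n$, using Lemma \ref{bl} as the base. Write $\mathcal{P}=P_\mu(\pi^{-1}\mathcal{B}_Y)$ and let $\mu=\int_X\mu_x\,d\mu(x)$ be the disintegration of $\mu$ over $\mathcal{P}$. From the definition of $\lambda_{n,\pi}^\mu$ and the identity $\mathbb{E}(1_A|\mathcal{P})(x)=\mu_x(A)$ for $\mu$-a.e. $x$, we get
$$\lambda_{n,\pi}^\mu=\int_X \mu_x^{(n)}\,d\mu(x),$$
where $\mu_x^{(n)}=\mu_x\times\cdots\times\mu_x$ is the $n$-fold product. So $\lambda_{n,\pi}^\mu$ is the $n$-fold relatively independent self-joining of $\mu$ over $\mathcal{P}$.

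\textbf{Measure of $\Delta_n^\prime(X)$.} Since $\Delta_n^\prime(X)=\bigcup_{i<j}\{x_i=x_j\}$, it suffices to check each pair $i<j$. Fubini gives
$$\mu_x^{(n)}(\{x_i=x_j\})=\mu_x^{(2)}(\Delta_2(X))=\int\mu_x(\{z\})\,d\mu_x(z).$$
This vanishes whenever $\mu_x$ is non-atomic, which holds for $\mu$-a.e. $x$ by Lemma \ref{bl}(1). Integrating in $x$ gives $\lambda_{n,\pi}^\mu(\Delta_n^\prime(X))=0$.

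\textbf{Ergodicity.} This is the main step. For $n=2$ it is exactly Lemma \ref{bl}(2): the extension $\phi:(X,\mathcal{B}_X,\mu,G)\to(X,\mathcal{P},\mu,G)$ is relatively weakly mixing. For larger $n$ I would invoke the standard fact, due to Furstenberg and valid for amenable (indeed arbitrary countable) group actions, that relative weak mixing passes to relative products.

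Concretely, suppose $\phi$ is relatively weakly mixing and $\psi:(Z,\eta)\to(X,\mathcal{P},\mu)$ is an ergodic extension. Then the relative product $Z\times_{\mathcal{P}}X$ is ergodic. Apply this with $Z=X^{(n-1)}$, $\eta=\lambda_{n-1,\pi}^\mu$ (ergodic by the induction hypothesis), and $\psi$ the map induced by the common factor $\mathcal{P}$ on the first coordinate. The relative product $X^{(n-1)}\times_{\mathcal{P}}X$ with respect to $\eta$ and $\mu$ is exactly $\lambda_{n,\pi}^\mu$, because both disintegrate over $\mathcal{P}$ as $\int\mu_x^{(n-1)}\times\mu_x\,d\mu(x)$. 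Hence $\lambda_{n,\pi}^\mu$ is ergodic.

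\textbf{Main obstacle.} The delicate point is justifying the relative-product fact in this setting. If citing it is not acceptable, I would prove it through the Hilbert-module characterization of relative weak mixing. Namely, no nontrivial $\mathcal{P}$-finite-rank invariant submodule exists in $L^2(\mu)\ominus L^2(\mathcal{P})$.

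Then take a $G$-invariant function $F$ on $Z\times_{\mathcal{P}}X$. Such an $F$ defines a $G$-equivariant $\mathcal{P}$-Hilbert–Schmidt operator $L^2(\mu)\to L^2(\eta)$. Its adjoint composed with itself is a $\mathcal{P}$-compact invariant operator on $L^2(\mu)$, so its range lies in $L^2(\mathcal{P})$. This forces $F$ to be measurable with respect to $Z$ alone, and ergodicity of $\eta$ then forces $F$ to be constant.

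Since only Følner-independent, purely measure-theoretic structure is used, the amenability of $G$ plays no special role here beyond guaranteeing Lemma \ref{bl}.
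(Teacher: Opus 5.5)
Your proposal is correct and follows essentially the same route as the paper. You write $\lambda_{n,\pi}^\mu=\int_X\mu_x^{(n)}\,d\mu(x)$ and use Fubini with the non-atomicity of $\mu_x$ from Lemma \ref{bl}(1) to get $\lambda_{n,\pi}^\mu(\Delta_n^\prime(X))=0$, which the paper leaves implicit; ergodicity comes from lifting the relative weak mixing of Lemma \ref{bl}(2) to $n$-fold relative products, which the paper cites directly as Zimmer's Corollary 7.11 while you derive it by induction from the two-factor relative-product statement and sketch its Hilbert-module proof.
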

\begin{proof}
(1) By Lemma \ref{bl}, $\phi: (X, \mathcal{B}_X,\mu, G)\ra (X, P_\mu(\pi^{-1}\mathcal{B}_Y),\mu, G)$ is a weakly mixing extension.
By \cite[Corollary 7.11]{Zimmer}, $\lambda_{n,\pi}^\mu$ is ergodic.

(2) By Fubini Theorem, we have $\lambda_{n,\pi}^\mu(\Delta_n^\prime(X))=\int_X \mu_x^{(n)}(\Delta_n^\prime(X))d\mu(x)=0$.
\end{proof}

\section{Ergodic decompositions}\label{sect:Ergodic decompositions}

In \cite{Y2015}, Yan gave the ergodic decompositions of  measure conditional entropy of partition.
We will study the ergodic decompositions of  measure conditional entropy of cover.

\begin{lem}\cite[Theorem 3.3]{Y2015}\label{20260621}
Let $\pi:(X,G)\ra (Y,G)$ be a factor map between TDSs.
Let $\mu\in M(X,G)$ and $\mu=\int_{\Omega}\theta \textrm{d} m(\theta)$ be the ergodic decomposition of $\mu$.
Then $$h_\mu(G,\alpha|\pi)=\int_{\Omega}h_{\theta}(G,\alpha|\pi)\textrm{d} m(\theta),
h_\mu(G|\pi)=\int_{\Omega}h_{\theta}(G|\pi)\textrm{d} m(\theta)$$
\end{lem}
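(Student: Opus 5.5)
This statement is cited from \cite[Theorem 3.3]{Y2015}, so in the paper it is taken as known. If I were to prove it, I would first establish the partition formula $h_\mu(G,\alpha|\pi)=\int_\Omega h_\theta(G,\alpha|\pi)\,\mathrm{d}m(\theta)$ for a fixed $\alpha\in\mathcal{P}_X$, and then pass to the supremum over partitions.

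\textbf{Step 1: static conditional entropy.} Fix $F\in F(G)$ and set $\alpha_F=\bigvee_{g\in F}g^{-1}\alpha$. I would show
$$H_\mu(\alpha_F|\pi)=\int_\Omega H_\theta(\alpha_F|\pi)\,\mathrm{d}m(\theta)+\Big(H_\mu(\alpha_F|\pi^{-1}\mathcal{B}_Y)-H_\mu(\alpha_F|\pi^{-1}\mathcal{B}_Y\vee\mathcal{I})\Big).$$
Here $\mathcal{I}$ is the $\sigma$-algebra of $G$-invariant sets, whose atoms carry the ergodic components $\theta$. The identity
$$H_\mu(\alpha_F|\pi^{-1}\mathcal{B}_Y\vee\mathcal{I})=\int_\Omega H_\theta(\alpha_F|\pi^{-1}\mathcal{B}_Y)\,\mathrm{d}m(\theta)$$
is the delicate part. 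For $\theta$-almost every point, conditional expectations given $\pi^{-1}\mathcal{B}_Y\vee\mathcal{I}$ with respect to $\mu$ should agree with conditional expectations given $\pi^{-1}\mathcal{B}_Y$ with respect to $\theta$. I would verify this by disintegrating $\mu$ over $\pi^{-1}\mathcal{B}_Y\vee\mathcal{I}$ and checking that the fibre measures are the fibre measures of $\theta$ over $\pi^{-1}\mathcal{B}_Y$. This uses the standard Borel structure and a countable generating algebra for $\mathcal{B}_Y$.

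\textbf{Step 2: the correction term vanishes asymptotically.} For every $F$,
$$0\le H_\mu(\alpha_F|\pi)-H_\mu(\alpha_F|\pi^{-1}\mathcal{B}_Y\vee\mathcal{I})\le H_\mu(\mathcal{I}'|\pi^{-1}\mathcal{B}_Y),$$
where $\mathcal{I}'$ is any finite invariant partition. This bound is not uniform for arbitrary $\mathcal{I}$, so I would approximate. If $\mathcal{Q}$ is a finite sub-partition of $\mathcal{I}$, the discrepancy is at most $H_\mu(\mathcal{Q})$, a constant independent of $F$. Dividing by $|F_m|$ kills it, and a martingale convergence argument over increasing $\mathcal{Q}\uparrow\mathcal{I}$ handles the general case.

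Alternatively, I would use $H_\mu(\alpha_F|\mathcal{C}\vee\mathcal{I})\le H_\mu(\alpha_F|\mathcal{C})$ together with the concavity and affinity of $\theta\mapsto H_\theta(\alpha_F|\pi)$ up to an error of $H(m\text{-partition})$, refined by countable approximations. Dividing by $|F_m|$ and applying dominated convergence (each term is bounded by $\log|\alpha|$) together with Lemma \ref{lem 2.1} for each $\theta$ then gives the first formula.

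\textbf{Step 3: the supremum over partitions.} The inequality $h_\mu(G|\pi)\le\int_\Omega h_\theta(G|\pi)\,\mathrm{d}m(\theta)$ is immediate from the first formula. For the reverse inequality, I would fix a refining sequence of partitions $\alpha_k$ with diameters tending to zero, so that $h_\theta(G|\pi)=\lim_k h_\theta(G,\alpha_k|\pi)$ for every $\theta$ (a Kolmogorov--Sinai type property, valid for each $\theta$). Monotone convergence then finishes the argument.

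\textbf{Main obstacle.} I expect the hardest step to be the measurable identification of the conditional measures in Step 1, i.e.\ relating conditioning on $\pi^{-1}\mathcal{B}_Y$ under $\mu$ to conditioning under each component $\theta$, since $\mathcal{I}$ need not be contained in $\pi^{-1}\mathcal{B}_Y$.
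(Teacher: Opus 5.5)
The paper gives no proof of this lemma (it is quoted from Yan), so I judge your argument on its own. Step 1 plus dominated convergence correctly yields $h_\mu(G,\alpha|\pi^{-1}\mathcal{B}_Y\vee\mathcal{I})=\int_\Omega h_\theta(G,\alpha|\pi)\,\mathrm{d}m(\theta)$. Hence $h_\mu(G,\alpha|\pi)\ge\int_\Omega h_\theta(G,\alpha|\pi)\,\mathrm{d}m(\theta)$, and Step 3 is fine.

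\textbf{The gap is Step 2.} That step carries the real content, namely the reverse inequality $h_\mu(G,\alpha|\mathcal{C})\le h_\mu(G,\alpha|\mathcal{C}\vee\mathcal{I})$, where $\mathcal{C}=\pi^{-1}\mathcal{B}_Y$.

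Your first displayed bound is false as written. Take $\mathcal{I}'$ trivial, $G$ acting trivially on $X=\{p,q\}$, $Y$ a point, $\mu=\frac12(\delta_p+\delta_q)$, $\alpha=\{\{p\},\{q\}\}$ and $F=\{e_G\}$. Then the left side is $\log 2$ and the right side is $0$.

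What is true is weaker. For a finite partition $\mathcal{Q}$ into invariant sets, the defect is at most $H_\mu(\mathcal{Q})$ uniformly in $F$, so $h_\mu(G,\alpha|\mathcal{C})=h_\mu(G,\alpha|\mathcal{C}\vee\sigma(\mathcal{Q}))$. To pass to $\mathcal{I}$ along $\sigma(\mathcal{Q}_k)\uparrow\mathcal{I}$, you then need
\[
\lim_{m\to\infty}\frac{1}{|F_m|}H_\mu(\alpha_{F_m}|\mathcal{C}\vee\mathcal{I})\ \ge\ \inf_k\,\lim_{m\to\infty}\frac{1}{|F_m|}H_\mu(\alpha_{F_m}|\mathcal{C}\vee\sigma(\mathcal{Q}_k)).
\]
This is an interchange of limits, and nothing you cite justifies it:
\begin{itemize}
\item Martingale convergence gives $H_\mu(\alpha_F|\mathcal{C}\vee\sigma(\mathcal{Q}_k))\downarrow H_\mu(\alpha_F|\mathcal{C}\vee\mathcal{I})$ only for each fixed $F$, with no uniformity in $F$.
\item The uniform error $H_\mu(\mathcal{Q}_k)$ tends to infinity in general, e.g.\ whenever $m$ is non-atomic.
\end{itemize}
So \emph{dividing by $|F_m|$} and \emph{letting $\mathcal{Q}\uparrow\mathcal{I}$} cannot simply be combined. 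Your alternative route (affinity up to the entropy of a partition of $\Omega$, then refining) runs into the same interchange.

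\textbf{What is missing} is a representation of relative entropy as an infimum over finite sets, or at least a finite-block upper bound that is uniform in the conditioning $\sigma$-algebra. For instance, for every $G$-invariant sub-$\sigma$-algebra $\mathcal{D}$,
\[
h_\mu(G,\alpha|\mathcal{D})=\inf_{F\in F(G)}\frac{1}{|F|}H_\mu(\alpha_F|\mathcal{D}).
\]
This is the Downarowicz--Frej--Romagnoli infimum rule. Its Shearer-inequality proof carries over directly to conditional entropy, because $H_\mu(\alpha_{Fg}|\mathcal{D})=H_\mu(\alpha_F|\mathcal{D})$ by invariance of $\mu$ and $\mathcal{D}$. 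With it, the two infima commute:
\[
h_\mu(G,\alpha|\mathcal{C}\vee\mathcal{I})=\inf_{F}\inf_{k}\frac{1}{|F|}H_\mu(\alpha_F|\mathcal{C}\vee\sigma(\mathcal{Q}_k))=\inf_k h_\mu(G,\alpha|\mathcal{C}\vee\sigma(\mathcal{Q}_k))=h_\mu(G,\alpha|\mathcal{C}).
\]
Two other tools should do the same job:
\begin{itemize}
\item an Ornstein--Weiss quasi-tiling estimate whose error depends only on $|\alpha|$ and the invariance of $F$;
\item an orbital-past formula playing the role of $h=H_\mu(\alpha|\alpha^-\vee\mathcal{C})$ from the $\mathbb{Z}$ case.
\end{itemize}

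Finally, you placed the main obstacle in Step 1, but the identification of conditional measures there is standard. The genuinely non-trivial point is this limit interchange in Step 2.
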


For $\mu \in \mathcal{M}(X,G)$ we define the conditional entropy of $\mathcal{U}\in
\mathcal{C}_X$ with respect to a $G$-invariant
sub-$\sigma$-algebra $\mathcal{C}$ of  $\mathcal{B}_X^\mu$:

$$h_{\mu}(G, \mathcal{U}| \mathcal{C})= \inf_{\alpha\in \mathcal{P}_X, \alpha\succeq \mathcal{U}} h_\mu (G, \alpha| \mathcal{C}).$$

\begin{lem}\label{er-decom}
Let $\pi:(X,G)\ra (Y,G)$ be a factor map between TDSs.
Let $\mu\in M(X,G)$ and $\mu=\int_{\Omega}\theta \textrm{d} m(\theta)$ be the ergodic decomposition of $\mu$.
Then $$h_\mu(G,\mathcal{U}|\pi)=\int_{\Omega}h_{\theta}(G,\mathcal{U}|\pi)\textrm{d} m(\theta)$$
\end{lem}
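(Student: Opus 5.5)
We will prove the two inequalities $\le$ and $\ge$ separately. Throughout, the integrand $\theta\mapsto h_\theta(G,\mathcal{U}|\pi)$ should be measurable. We will obtain this by expressing it as an infimum over a countable family of partitions. For the cover $\mathcal{U}=\{U_1,\dots,U_k\}$, fix a countable family $\mathcal{P}_0$ of partitions finer than $\mathcal{U}$, built from a countable base and dense in the relevant sense. Measurability then follows from Lemma \ref{20260621}, applied partition by partition, together with the fact that each $\theta\mapsto h_\theta(G,\alpha|\pi)$ is measurable.

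\textbf{The easy inequality $\ge$.} For any partition $\alpha\succeq\mathcal{U}$, Lemma \ref{20260621} gives
$$h_\mu(G,\alpha|\pi)=\int_\Omega h_\theta(G,\alpha|\pi)\,dm(\theta)\ge\int_\Omega h_\theta(G,\mathcal{U}|\pi)\,dm(\theta).$$
Taking the infimum over all such $\alpha$ yields $h_\mu(G,\mathcal{U}|\pi)\ge\int_\Omega h_\theta(G,\mathcal{U}|\pi)\,dm(\theta)$.

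\textbf{The harder inequality $\le$.} Here the infimum must be taken inside the integral, and this is the main obstacle. The plan is a measurable-selection and gluing argument.

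First, fix $\epsilon>0$. For $m$-a.e. $\theta$, choose $\alpha_\theta\in\mathcal{P}_0$ with $\alpha_\theta\succeq\mathcal{U}$ and $h_\theta(G,\alpha_\theta|\pi)<h_\theta(G,\mathcal{U}|\pi)+\epsilon$. Since $\mathcal{P}_0=\{\beta_1,\beta_2,\dots\}$ is countable, we select $\alpha_\theta=\beta_j$ for the least admissible $j$. This partitions $\Omega$ into measurable pieces $\Omega_j$.

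Next, we transfer this partition of $\Omega$ to $X$. Using the ergodic decomposition, write $\theta=\theta_x$, so that $\theta_x$ depends on $x$ through the $G$-invariant $\sigma$-algebra $\mathcal{I}$. Set $X_j=\{x:\theta_x\in\Omega_j\}$; these are $G$-invariant Borel sets. Define the glued partition
$$\alpha=\{X_j\cap B:\ j\ge 1,\ B\in\beta_j\}.$$
This is a countable partition, still finer than $\mathcal{U}$. To make it finite, we truncate: we keep $j\le J$ and, on the remainder $\bigcup_{j>J}X_j$, use a single fixed partition finer than $\mathcal{U}$. The remainder has small $\mu$-measure, so its contribution to the entropy is small, provided we use that $h_\theta(G,\beta|\pi)\le\log k$ for $\beta$ chosen with at most $k$ elements. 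This is the reason to restrict $\mathcal{P}_0$ to partitions with at most $k$ atoms, for example partitions of the form $\{U_1', U_2'\setminus U_1',\dots\}$ with $U_i'\subseteq U_i$.

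Since each $X_j$ is $G$-invariant and $\theta$-trivial, on each ergodic $\theta\in\Omega_j$ the partition $\alpha$ coincides with $\beta_j$ modulo $\theta$-null sets. Hence $h_\theta(G,\alpha|\pi)=h_\theta(G,\beta_j|\pi)$ for $\theta\in\Omega_j$. Applying Lemma \ref{20260621} to $\alpha$ then gives
$$h_\mu(G,\mathcal{U}|\pi)\le h_\mu(G,\alpha|\pi)\le\int_\Omega h_\theta(G,\mathcal{U}|\pi)\,dm(\theta)+2\epsilon.$$
Letting $\epsilon\to0$ completes the proof.

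\textbf{Remaining technical points.} The delicate step is justifying that $\mathcal{P}_0$ can be chosen countable while still approximating the infimum for every $\theta$. We plan to handle this by approximating any $\alpha\succeq\mathcal{U}$ in $\theta$-measure with refinements drawn from a countable algebra of sets, and then invoking continuity of $h_\theta(G,\cdot|\pi)$ in the Rokhlin metric. If that proves troublesome, an alternative is to work directly with the fact that $\theta\mapsto\theta(A)$ is measurable, and to choose $\Omega_j$ only for partitions in a countable dense family up to $\epsilon$.
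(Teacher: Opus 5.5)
Your proposal is correct and is essentially the paper's argument: a countable family of partitions in $\mathcal{U}^\ast$ that is dense for every invariant measure, a least-index selection splitting $\Omega$ into pieces $\Omega_n$, gluing over disjoint sets $X_n$ carrying the corresponding parts of $\mu$, and then Lemma \ref{20260621}. The one difference worth noting is that the paper glues index by index, $A_i=\bigcup_n (X_n\cap A_i^{k_n})$, so the glued partition stays in $\mathcal{U}^\ast$ with $M$ atoms and your truncation step (with its $\log k$ tail estimate) is not needed.
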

\begin{proof}
Let $\mathcal{U} =\{U_1,U_2,\cdots,U_M\}\in \mathcal{C}_X$ and define
$$\mathcal{U}^\ast=\{\alpha\in  \mathcal{P}_X: \alpha=\{A_1,A_2,\cdots,A_M\}, A_m\subset U_m, 1\le m\le M\}$$

Since $X$ is a compact metric space,
there exists a sequence of partitions $(\alpha_k : k\in \mathbb{N})$ in $\mathcal{U}^\ast$
which is $L^1(X,\mathcal{B}_X,\nu)$-dense in $\mathcal{U}^\ast$ for every $\nu \in M(X,G)$.
In particular for every $\nu \in M(X,G)$ we have
$$h_\mu(G,\mathcal{U}|\pi)=\inf_{n\in \mathbb{N}}h_\mu(G,\alpha_k|\pi).$$
Write $\alpha_k=\{A_1^k,\cdots, A_M^k\}, k\in \mathbb{N}$, by Fatou’s lemma and Lemma \ref{20260621}
\begin{equation*}
    \begin{aligned}
		&h_\mu(G,\mathcal{U}|\pi)=\inf_{n\in \mathbb{N}}h_\mu(G,\alpha_k|\pi)
=\inf_{n\in \mathbb{N}}\int_{\Omega}h_{\theta}(G,\alpha_k|\pi)\textrm{d} m(\theta)\\
\ge & \int_{\Omega}\inf_{n\in \mathbb{N}}h_{\theta}(G,\alpha_k|\pi)\textrm{d} m(\theta)
=\int_{\Omega}h_{\theta}(G,\mathcal{U}|\pi)\textrm{d} m(\theta)
    \end{aligned}
\end{equation*}

To prove the other one, for every $\epsilon>0$ and $n \in \mathbb{N}$
define $$B_n^\epsilon
=\{\theta \in \Omega : h_{\theta}(G,\alpha_n|\pi)< h_{\theta}(G,\mathcal{U}|\pi)+ \epsilon\}.$$
we know that $m(\Omega\Delta\cup_{n\in \mathbb{N}} B_n^\epsilon)=0$,
so there exists a partition $(\Omega_n : n\in \mathbb{N})$ of $\Omega$ with $m(\Omega_n)>0$,
and a sequence of partitions $(\alpha_{k_n}: n\in \mathbb{N})$ such that for
m-a.e. $\theta \in \Omega_n$ we have
$$h_{\theta}(G,\alpha_{k_n}|\pi)< h_{\theta}(G,\mathcal{U}|\pi)+ \epsilon.$$

Therefore, for every $n\in  \mathbb{N}$ we can define
$\mu_n \in M(X,G)$ as $$\mu_n=\frac{1}{m(\Omega_n)}\int_{\Omega_n}\theta \textrm{d} m(\theta),$$
then $\mu=\sum_{n\in \mathbb{N}}m(\Omega_n)\mu_n$.

We deduce
\begin{equation*}
    \begin{aligned}
		&h_{\mu_n}(G,\alpha_{k_n}|\pi)
=\frac{1}{m(\Omega_n)}\int_{\Omega_n}h_{\theta}(G,\alpha_{k_n}|\pi)\textrm{d} m(\theta)\\
 \le &\frac{1}{m(\Omega_n)}\int_{\Omega_n}h_{\theta}(G,\mathcal{U} |\pi)\textrm{d} m(\theta)+\epsilon
    \end{aligned}
\end{equation*}

Since $\{\mu_n:n\in \mathbb{N}\}$ is mutually singular,
there exists a family of Borel sets $\{X_n: n\in \mathbb{N}\}$ such that
$\mu_n(X_n) = 1$ and $\mu_n(X_k) = 0$ for $k\not= n$.
For $1\le i \le M$ define $A_i = \cup_{n\ge 1}(X_n \cap A_i^{k_n})$
then $\alpha =\{A_1,\cdots ,A_M\}\in \mathcal{U}^\ast$

\begin{equation*}
    \begin{aligned}
		&h_\mu(G,\mathcal{U}|\pi)\le h_\mu(G,\alpha|\pi)=\sum_{n\in \mathbb{N}} m(\Omega_n)h_{\mu_n}(G,\alpha|\pi)\\
 = & \sum m(\Omega_n)h_{\mu_n}(G,\alpha_{k_n}|\pi)\le\int_{\Omega}h_{\theta}(G,\mathcal{U}|\pi)\textrm{d} m(\theta)+\epsilon
    \end{aligned}
\end{equation*}

\end{proof}

The following lemma \cite{DZ15} is fundamental in the study of
the structure of relative entropy $n$-tuples.

\begin{lem} \cite[Theorem 3.11]{DZ15}\label{etfm-cor-6}
Let $\mu\in M(X,G)$ and $\mathcal{U}
 =\{U_1,\cdots, U_n \}\in \mathcal{C}_X$. Then the following
 statements are equivalent:
\begin{enumerate}

\item $h_{\mu} (G,\mathcal{U}|\pi)>0$;

\item $h_\mu (G, \alpha)> 0$ if $\alpha\in \mathcal{C}_X$ is
finer than $\mathcal{U}$;

\item
$\lambda_{n,\pi}^\mu(\prod_{i=1}^n U_i^c)>0$.
\end{enumerate}
\end{lem}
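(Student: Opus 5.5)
The implication (1)$\Rightarrow$(2) is immediate from the definition. Since $h_{\mu}(G,\mathcal{U}|\pi)$ is the infimum of $h_\mu(G,\alpha|\pi)$ over partitions $\alpha\succeq\mathcal{U}$, any such $\alpha$ satisfies $h_\mu(G,\alpha|\pi)\ge h_\mu(G,\mathcal{U}|\pi)>0$. I would therefore prove the cycle (1)$\Rightarrow$(2)$\Rightarrow$(3)$\Rightarrow$(1). Throughout, write $\mathcal{P}=P_\mu(\pi^{-1}\mathcal{B}_Y)$ and let $\mu=\int_X\mu_x\,d\mu(x)$ be the disintegration over $\mathcal{P}$. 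In this notation $\lambda_{n,\pi}^\mu(\prod_i U_i^c)=\int_X\prod_{i=1}^n\mu_x(U_i^c)\,d\mu(x)$.

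For (2)$\Rightarrow$(3), I argue by contraposition. Suppose $\lambda_{n,\pi}^\mu(\prod_i U_i^c)=0$. Then for $\mu$-a.e. $x$ some factor vanishes, i.e. $\mu_x(U_i)=1$ for some $i$. Put $B_i=\{x:\mu_x(U_i^c)=0\}$. Each $B_i$ is $\mathcal{P}$-measurable, and $\bigcup_i B_i$ has full measure. Disjointify to get $C_1=B_1$ and $C_i=B_i\setminus\bigcup_{j<i}B_j$. Then set $A_i=C_i\cap U_i$ and absorb the null remainder $X\setminus\bigcup_i A_i$ into the pieces, respecting $A_i\subset U_i$. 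This gives $\alpha=\{A_1,\dots,A_n\}\succeq\mathcal{U}$ with $A_i=C_i$ mod $\mu$, because $\mu(C_i\setminus U_i)=\int_{C_i}\mu_x(U_i^c)\,d\mu=0$. So $\alpha$ is $\mathcal{P}$-measurable mod $\mu$. By the defining property of the relative Pinsker $\sigma$-algebra (each $A_i$ has $h_\mu(G,\{A_i,A_i^c\}|\pi)=0$) and subadditivity, $h_\mu(G,\alpha|\pi)=0$. This contradicts (2).

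The main obstacle is (3)$\Rightarrow$(1): a uniform positive lower bound over all $\alpha\succeq\mathcal{U}$, rather than positivity for each one. First I reduce to ergodic $\mu$. By Lemma \ref{er-decom} and Lemma \ref{20260621}, both $h_\mu(G,\mathcal{U}|\pi)$ and $\lambda_{n,\pi}^\mu$ decompose over the ergodic decomposition, the latter because the Pinsker factor is compatible with it. Hence if $\lambda_{n,\pi}^\mu(\prod U_i^c)>0$, a positive-measure set of ergodic components $\theta$ has $\lambda_{n,\pi}^\theta(\prod U_i^c)>0$, and it suffices to obtain $h_\theta(G,\mathcal{U}|\pi)>0$ for those $\theta$.

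For ergodic $\mu$ I would use the relative Pinsker formula $h_\mu(G,\alpha|\pi)=h_\mu(G,\alpha|\mathcal{P})$. The plan is to follow the Huang--Ye--Zhang strategy for amenable groups: express $h_\mu(G,\mathcal{U}|\pi)$ as $\lim_m\frac1{|F_m|}\inf_{\beta\succeq\mathcal{U}_{F_m}}H_\mu(\beta|\mathcal{P})$, where $\mathcal{U}_{F_m}=\bigvee_{g\in F_m}g^{-1}\mathcal{U}$. Then I bound $H_\mu(\beta|\mathcal{P})$ from below by exploiting relative independence over $\mathcal{P}$ along a sufficiently separated subset $E\subset F_m$ of positive proportion. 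Along such an $E$, the measures $\mu_x$ restricted to the coordinates $\{g^{-1}\mathcal{B}\}_{g\in E}$ are asymptotically products; this step uses the weak mixing of the extension over $\mathcal{P}$ in the spirit of Lemma \ref{bl}. For each $g\in E$, the positivity of $\int\prod_i\mu_x(g^{-1}U_i^c)\,d\mu$ forces any $\beta\succeq\mathcal{U}_{F_m}$ to split a set of $\mu_x$-mass bounded below, so each such coordinate contributes at least a fixed $c>0$ to the conditional entropy. The delicate part is making the "asymptotic independence over $\mathcal{P}$" quantitative enough to sum these contributions linearly in $|E|$. If this direct route stalls, my fallback is a compactness argument on $\mathcal{U}^\ast$ via the $L^1$-dense sequence $(\alpha_k)$ used in the proof of Lemma \ref{er-decom}, combined with the (2)$\Leftrightarrow$(3) equivalence already established.
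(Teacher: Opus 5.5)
The paper contains no proof of this lemma; it is quoted from Dooley--Zhang (Theorem 3.11 there), so your argument has to stand on its own. Your (1)$\Rightarrow$(2) and your contrapositive proof of (2)$\Rightarrow$(3) are correct. The gap is (3)$\Rightarrow$(1). This direction is the entire content of the lemma, and you only outline it. There are three concrete problems.

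First, ``$\mu_x$ restricted to the coordinates $g^{-1}\mathcal{B}$'' is vacuous, since $g^{-1}\mathcal{B}_X=\mathcal{B}_X$. You must fix a finite partition, e.g. $\gamma=\bigvee_i\{U_i,U_i^c\}$, and work with $\gamma_E=\bigvee_{g\in E}g^{-1}\gamma$.

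Second, the mechanism is misidentified. The near-independence you need is $H_\mu(\gamma_E|\mathcal{P})\ge|E|(H_\mu(\gamma|\mathcal{P})-\epsilon)$ for all sufficiently spread-out finite $E$. This does not follow from weak mixing of the extension (Lemma \ref{bl}), which is a density statement about pairs of translates. Indeed, such a bound over a weakly mixing extension of zero relative entropy would force positive relative entropy. It is an entropy phenomenon: $X$ has completely positive entropy relative to $\mathcal{P}$, hence is relatively uniformly mixing over $\mathcal{P}$ (Rudolph--Weiss, Danilenko).

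Third, and decisively, even granting near-independence, you leave open how each $g\in E$ contributes a fixed $c>0$ \emph{uniformly over all} $\beta\succeq\mathcal{U}_{F_m}$. That uniformity is exactly what separates (1) from (2). Your fallback does not supply it:
\begin{itemize}
\item $\mathcal{U}^\ast$ is not compact in the $L^1$ metric;
\item positivity of every $h_\mu(G,\alpha_k|\pi)$ says nothing about $\inf_k h_\mu(G,\alpha_k|\pi)$;
\item weak$^*$ limits of partitions are fuzzy partitions, along which entropy does not pass to the limit.
\end{itemize}

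Here is one way to close the gap. Given $\beta\succeq\mathcal{U}_E$, label each atom of $\beta$ by some $s\in\{1,\dots,n\}^E$ such that the atom is contained in $\bigcap_{g\in E}g^{-1}U_{s(g)}$. The coordinate $s(g)$ defines a partition $\eta_g\preceq\beta$ with $\zeta_g:=g\eta_g\in\mathcal{U}^\ast$. By invariance of $\mu$ and $\mathcal{P}$,
\[
H_\mu(\beta|\mathcal{P})\ge H_\mu(\gamma_E|\mathcal{P})-H_\mu(\gamma_E|\beta\vee\mathcal{P})\ge H_\mu(\gamma_E|\mathcal{P})-\sum_{g\in E}H_\mu(\gamma|\zeta_g\vee\mathcal{P}).
\]
For $\zeta=\{Z_1,\dots,Z_n\}\in\mathcal{U}^\ast$ one has $Z_i\cap U_i^c=\emptyset$. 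Hence Pinsker's inequality, applied to each $\mu_x$, gives
\[
H_\mu(\gamma|\mathcal{P})-H_\mu(\gamma|\zeta\vee\mathcal{P})\ge c_0:=\tfrac12\int_X\min_i\mu_x(U_i^c)^2\,d\mu(x),
\]
uniformly in $\zeta$, and $c_0>0$ by (3). Take $\epsilon=c_0/2$ and a spread-out $E\subseteq F_m$ whose cardinality is at least a fixed proportion of $|F_m|$ (a greedy choice works). Then every partition $\alpha\succeq\mathcal{U}$ satisfies
\[
H_\mu(\alpha_{F_m}|\pi)\ge H_\mu(\alpha_{F_m}|\mathcal{P})\ge c_0|E|/2,
\]
which gives (1).
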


\begin{thm} \label{et-decom}
Let $\pi:(X,G)\ra (Y,G)$ be a factor map between TDSs and $2\le n\in\N$.  We have the following characterizations.
Let $\mu\in M(X,G)$ and $\mu=\int_{\Omega}\theta \textrm{d} m(\theta)$ be the ergodic decomposition of $\mu$. Then

\begin{enumerate}
\item for $m$-a.e. $\theta\in \Omega$, $E_{n,\pi}^{\theta}(X,
G)\subseteq E_{n,\pi}^{\mu}(X, G)$ for each $n\ge 2$.

\item if $(x_i)_1^n\in E_n^{\mu}(X, G)$, then for every
measurable neighborhood $V$ of $(x_i)_1^n$, $m (\{ \theta\in \Omega:
V\cap E_{n,\pi}^{\theta}(X, G)\not=\emptyset\})>0$. Thus for an
appropriate choice of $\Omega'\subset \Omega$, we can require
$$
\overline{\cup \{ E_{n,\pi}^{\theta}(X, G): \theta \in \Omega'
\}}\setminus \Delta_n(X) =E_{n,\pi}^{\mu}(X, G).$$
\end{enumerate}
\end{thm}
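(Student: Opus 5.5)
The plan follows the classical argument of Huang--Ye and Blanchard et al. for the non-relative case: combine Lemma \ref{er-decom} with the characterization in Lemma \ref{etfm-cor-6}. Throughout we use a countable family of admissible open covers. Fix a countable base $\mathcal{W}$ of $X$ closed under finite unions, and let $\mathfrak{U}_n$ be the countable collection of covers $\mathcal{U}=\{U_1,\dots,U_n\}$ with each $U_i^c$ a finite union of closures of elements of $\mathcal{W}$.

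\emph{Step 1: part (1).} Let $\mathcal{U}\in\mathfrak{U}_n$ with $h_\mu(G,\mathcal{U}|\pi)=0$. By Lemma \ref{er-decom},
\[
0=\int_\Omega h_\theta(G,\mathcal{U}|\pi)\,\mathrm{d}m(\theta),
\]
so $h_\theta(G,\mathcal{U}|\pi)=0$ for $\theta$ outside an $m$-null set $N_\mathcal{U}$. Put $N=\bigcup_{\mathcal{U}}N_\mathcal{U}$, a countable union and hence null. Fix $\theta\notin N$ and $(x_i)_1^n\in E^\theta_{n,\pi}(X,G)$, and suppose $(x_i)\notin E^\mu_{n,\pi}(X,G)$. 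By Lemma \ref{lem:tuple-supp} there is a product neighborhood $\prod V_i$ of $(x_i)$ with $\lambda^\mu_{n,\pi}(\prod \overline{V_i})=0$, where the $\overline{V_i}$ are shrunk to closures of base sets. Set $U_i=\overline{V_i}^{\,c}$. For distinct coordinates $U_i$ misses a neighborhood of $x_i$, and we arrange that the $U_i$ cover $X$ (handled in Step 3). By Lemma \ref{etfm-cor-6}, $h_\mu(G,\mathcal{U}|\pi)=0$, hence $h_\theta(G,\mathcal{U}|\pi)=0$. Applying Lemma \ref{etfm-cor-6} to $\theta$ then gives $\lambda^\theta_{n,\pi}(\prod \overline{V_i})=0$. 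This contradicts $(x_i)\in\supp\lambda^\theta_{n,\pi}$ given by Lemma \ref{lem:tuple-supp} for $\theta$.

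\emph{Step 2: part (2).} Let $(x_i)\in E^\mu_{n,\pi}(X,G)$ and let $V\ni(x_i)$, which we shrink to $\prod \overline{V_i}$ with $V_i$ from the base. Suppose $m(\{\theta: V\cap E^\theta_{n,\pi}\ne\emptyset\})=0$. Then for a.e. $\theta$, $\lambda^\theta_{n,\pi}(\prod V_i\setminus\Delta_n(X))=0$ by Lemma \ref{lem:tuple-supp}. The next ingredient is to handle the diagonal part and pass to $\mu$. With $\mathcal{U}=\{\overline{V_i}^c\}$, Lemma \ref{etfm-cor-6} turns $\lambda^\theta_{n,\pi}(\prod \overline{V_i})=0$ into $h_\theta(G,\mathcal{U}|\pi)=0$. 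Lemma \ref{er-decom} then gives $h_\mu(G,\mathcal{U}|\pi)=0$, and Lemma \ref{etfm-cor-6} gives $\lambda^\mu_{n,\pi}(\prod \overline{V_i})=0$, contradicting $(x_i)\in\supp\lambda^\mu_{n,\pi}$.

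\emph{Step 3: the $\Omega'$ statement and the main obstacle.} For the final statement, choose $\Omega'$ as the complement of $N$ from Step 1. Part (1) gives the inclusion ``$\subseteq$'', because $E^\mu_{n,\pi}$ is closed off the diagonal. Part (2), applied to a countable base of neighborhoods, gives density, since removing the null set $N$ does not affect the positive-measure conclusion.

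The main obstacle is the diagonal and covering issue: the tuple $(x_i)$ may have repeated coordinates, so $\{\overline{V_i}^c\}$ need not cover $X$, and the $\theta$-support might meet $V$ only on $\Delta_n(X)$. I would handle this as in \cite{HY06}. Group equal coordinates, and use that $(x_i)\notin\Delta_n(X)$ to choose the $V_i$ so that the $V_i$ for distinct points are disjoint, which makes $\{\overline{V_i}^c\}$ a cover. For part (2), note that $\lambda^\theta_{n,\pi}(\prod \overline{V_i})>0$ already yields a support point in $\prod\overline{V_i}$; if every such point lay on $\Delta_n(X)$, this would force $\bigcap_i \overline{V_i}\neq\emptyset$, which disjointness excludes.
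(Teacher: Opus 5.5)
Your proposal is correct and follows essentially the paper's route: both parts move between $\lambda^{\theta}_{n,\pi}$ and $\lambda^{\mu}_{n,\pi}$ using Lemma \ref{etfm-cor-6} and the integral formula of Lemma \ref{er-decom}. Both also use product neighbourhoods whose closures have empty intersection, so that the complements form a cover and the product misses $\Delta_n(X)$, and countably many such neighbourhoods give a single null set in part (1). The differences are cosmetic: you argue part (2) by contraposition rather than directly, and you state explicitly that $\Omega'=\Omega\setminus N$ has full measure, which the paper leaves implicit but relies on in Theorem \ref{thm:minimal-positive-entropy}.
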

\begin{proof}

(1)
Let $U_i,\ i=1,\cdots,n$ be open subsets of $X$ with
$\bigcap_{i=1}^n \overline{U_i}=\emptyset$ and $(\prod_{i=1}^n
U_i) \cap E^\mu_{n,\pi}(X,G)=\emptyset$.  By Lemma
\ref{lem:tuple-supp},
$\lambda_{n,\pi}^\mu(\prod_{i=1}^n U_i)= 0$. By Lemma
\ref{etfm-cor-6} $h_\mu(G,\mathcal{U}|\pi)=0$ , where $\mathcal{U}=\{ U_1^c,\cdots,U_n^c\}$. As
$\int_\Omega h_{\theta}(G,\mathcal{U}|Y) d
m(\theta)=h_\mu(G,\mathcal{U}|\pi)=0$, for
$m$-a.e. $\theta\in \Omega$ $h_{\theta}(G,\mathcal{U}|\pi)=0$.
By Lemma \ref{etfm-cor-6} $\lambda_{n,\pi}^{\theta}(\prod_{i=1}^n U_i)=0$.
By Lemma \ref{lem:tuple-supp} and the assumption of $\bigcap_{i=1}^n \overline{U_i}=\emptyset$,
we have $(\prod_{i=1}^n U_i) \cap E^{\theta}_{n,\pi}(X,G)=\emptyset$.

Since $E_{n,\pi}^\mu(X,G)\cup \Delta_n(X)\subseteq R_{n,\pi}$ is closed, its
complement can be written as a union of countable sets of the form
$\prod_{i=1}^n U_i$ with $U_i,i=1,\cdots,n$ open subsets satisfying
$\bigcap_{i=1}^n \overline{U_i}=\emptyset$. Then applying the above
procedure to each such a subset $\prod_{i=1}^n U_i$ one has that for
$m$-a.e. $\theta\in \Omega$, $E^{\theta}_n(X,G)\cap
{(E^\mu_{n,\pi}(X,G))}^c=\emptyset$, equivalently,
$E^{\theta}_{n,\pi}(X,G)\subseteq E^\mu_n(X,G)$.

(2)  With no loss of generality we assume $V=\prod_{i=1}^n A_i$, where $A_i$ is a
closed neighborhood of $x_i$, $1\le i\le n$ and $\bigcap_{i=1}^n
A_i=\emptyset$. By Lemma \ref{lem:tuple-supp} we have $\lambda_{n,\pi}^\mu(\prod_{i=1}^n A_i)>0$.
Using Lemma \ref{er-decom} and
Lemma \ref{etfm-cor-6}, we have
\begin{equation*}
\int_\Omega h_{\theta}(T,\{ A_1^c,\cdots,A_n^c \}|\pi) d m(\theta)
=h_\mu(T,\{ A_1^c,\cdots,A_n^c \}|\pi)>0.
\end{equation*}
There exists $\Omega'\subseteq \Omega$ with $m(\Omega')>0$ such that
if $\theta\in \Omega'$ then
$h_{\theta}(G,\{ A_1^c,\cdots,A_n^c \}|\pi)>0$.
By Lemma \ref{etfm-cor-6} $\lambda_{n,\pi}^{\theta}\left(\prod_{i=1}^nA_i\right)>0$.
By Lemma
\ref{lem:tuple-supp} $(\prod_{i=1}^nA_i) \cap E_{n,\pi}^{\theta}(X,G)\not=\emptyset$,
 i.e. $m(\{\theta\in \Omega: V\cap
E_{n,\pi}^{\theta}(X,G)\not=\emptyset\})>0$.

\end{proof}

\section{relative mean sensitivity and relative entropy for a measure}\label{sect:MS-tuple:measure}

In this section, we examine the relationship between relative mean sensitivity and relative entropy for a measure.
Let $G$ be a countable discrete infinite amenable group with $\mathcal{F}=\{F_m\}_{m=1}^\infty$ a
F{\o}lner sequence of $G$.

Let $\pi:(X,G)\ra (Y,G)$ be a factor map between TDSs with $\mu\in M(X,G)$ and $2\le n\in\N$.
We say that $\pi$ is {\it relative mean $\mu$-$n$-sensitive} with respect to $\mathcal{F}$
if there is $\ep>0$ such that for any $A\in \mathcal{B}_X$ with $\mu(A)>0$
there are $n$ distinct points $x_1,x_2,\dotsc,x_n\in A$
with $\pi(x_1)=\pi(x_2)=\cdots =\pi(x_n)$ such that
$$
\limsup_{m\to\infty}\frac{1}{|F_m|}\sum_{g\in F_m}\min_{1\le i\neq j\le n} d(g x_i, g x_j)>\ep.
$$

The following proposition is a simple observation of relative mean sensitivity for a measure.

\begin{prop}\label{prop:eq3}
Let $\pi:(X,G)\ra (Y,G)$ be a factor map and $2\le n\in\N$.
Then the following conditions are equivalent:
\begin{enumerate}
\item $\pi$ is relative mean $\mu$-$n$-sensitive  with respect to $\mathcal{F}$;
\item there is $\delta>0$ such that for any $A\in \mathcal{B}_X$ with $\mu(A)>0$
there are $(x_1,x_2,\dots,x_n)\in A^{(n)}\cap R_{n,\pi}$
and a subset $F$ of $G$ with $\overline{D}_{\mathcal{F}}(F)>\delta$ such that
$d(gx_i,gx_j)>\delta$
for all $1\leq i<j\leq n$ and $g\in F$.
\end{enumerate}
\end{prop}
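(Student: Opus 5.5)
The plan is to prove the two implications separately. Both rest on the elementary comparison between a Cesàro average of the bounded function $\phi(g)=\min_{1\le i\neq j\le n} d(gx_i,gx_j)$, which satisfies $0\le\phi\le D:=\diam(X)$, and the upper density of the set where $\phi$ is large.

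For (2)$\Rightarrow$(1), take $\delta$ as in (2) and set $\ep=\delta^2/2$. Given $A$ with $\mu(A)>0$, condition (2) supplies $(x_1,\dots,x_n)\in A^{(n)}\cap R_{n,\pi}$ and $F$ with $\overline{D}_{\mathcal{F}}(F)>\delta$. On $F$ we have $d(gx_i,gx_j)>\delta>0$, so the $x_i$ are pairwise distinct; they are $n$ distinct points of $A$ with a common image under $\pi$. Since $\phi\ge 0$ everywhere and $\phi>\delta$ on $F$,
$$\frac{1}{|F_m|}\sum_{g\in F_m}\phi(g)\ \ge\ \delta\,\frac{|F\cap F_m|}{|F_m|}.$$
Taking $\limsup_{m\to\infty}$ gives at least $\delta\,\overline{D}_{\mathcal{F}}(F)>\delta^2>\ep$.

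For (1)$\Rightarrow$(2), take $\ep$ as in (1) and choose $\delta>0$ small, for instance $\delta=\ep/(2(D+1))$, so that $\delta<\ep/2$ and $\delta D<\ep/2$. Given $A$, condition (1) yields distinct $x_1,\dots,x_n\in A$ in a common fibre, so $(x_i)\in A^{(n)}\cap R_{n,\pi}$, and their averages of $\phi$ satisfy $\limsup>\ep$. Put $F=\{g\in G:\phi(g)>\delta\}$. For $g\in F$ this gives $d(gx_i,gx_j)>\delta$ for all $i<j$. Splitting the sum according to whether $g\in F$,
$$\frac{1}{|F_m|}\sum_{g\in F_m}\phi(g)\ \le\ D\,\frac{|F\cap F_m|}{|F_m|}+\delta.$$
Taking $\limsup$ gives $\ep< D\,\overline{D}_{\mathcal{F}}(F)+\delta$, hence $\overline{D}_{\mathcal{F}}(F)>(\ep-\delta)/D>\ep/(2D)$.

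The only point needing care is that a single $\delta$ must serve both as the distance threshold and as the density threshold. So I will fix $\delta=\min\{\ep/2,\ \ep/(2D)\}$ in advance, adjusted slightly so the inequality is strict, which then gives $\overline{D}_{\mathcal{F}}(F)>\delta$. The degenerate case $D=0$ cannot occur, because (1) forces distinct points to exist. I expect no real obstacle beyond this bookkeeping, and no earlier result beyond the definition of $\overline{D}_{\mathcal{F}}$ is needed.
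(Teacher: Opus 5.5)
Your proof is correct and is essentially the paper's: for (1)$\Rightarrow$(2) you split the Ces\`aro average over $F=\{g:\phi(g)>\delta\}$ and its complement, and for (2)$\Rightarrow$(1) you bound the average below by $\delta\,\overline{D}_{\mathcal{F}}(F)$. The differences are only bookkeeping. You carry $\diam(X)$ explicitly where the paper normalizes to $d\le 1$, and your choice $\delta=\min\{\ep/2,\ep/(2D)\}$ already gives the strict inequality, so no further adjustment is needed. You also check that the $x_i$ are distinct in (2)$\Rightarrow$(1), which the paper leaves implicit.
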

\begin{proof}
(1)$\Rightarrow$(2) Since $X$ is compact metric space, without loss of generality,
we assume that $d(x,y) \leq 1$ for any $x,y \in X$.
Since $\pi$ is relative mean $\mu$-$n$-sensitive with respect to $\mathcal{F}$,
there exists $\ep>0$ such that
for any $A\in \mathcal{B}_X$ with $\mu(A)>0$ there are $n$ distinct points $x_1,x_2,\dotsc,x_n\in A$
with $\pi(x_1)=\pi(x_2)=\cdots =\pi(x_n)$ and

\[
	\limsup_{m\to\infty}\frac{1}{|F_m|}\sum_{g\in F_m}\min_{1\le i\neq j\le n} d(g x_i, g x_j)>\ep
.\]
Let $\delta=\frac{\ep}{2}$ and
$F = \{ g\in G: \min_{1 \leq i \neq j \leq n} d(g x_{i}, g x_{j}) > \delta \}.$
Since
\begin{equation*}
    \begin{aligned}
		2\delta=\ep <&\limsup_{m\to\infty}\frac{1}{|F_m|}\sum_{g\in F_m}\min_{1\le i\neq j\le n} d(g x_i, g x_j)\\
= &\limsup_{m\to\infty}  \frac{1}{|F_m|} \left[  \sum_{g\in F_m\cap F}\min_{1\le i\neq j\le n} d(g x_i, g x_j)
+  \sum_{g\in F_m \setminus F}\min_{1\le i\neq j\le n} d(g x_i, g x_j) \right] \\
\leq &\limsup_{m\to\infty}  \frac{1}{|F_m|} \left[ \sum_{g\in F_m\cap F} 1 + \sum_{F_m \setminus F} \delta \right] \leq \overline{D}_{\mathcal{F}}(F) + \delta,
    \end{aligned}
\end{equation*}
thus $\overline{D}_{\mathcal{F}}(F)> \delta.$

(2)$\Rightarrow$(1) There is $\delta>0$ such that for any $A\in \mathcal{B}_X$ with $\mu(A)>0$
there are $(x_1,\dots,x_n)\in A^{(n)}\cap R_{n,\pi}$
and a subset $F$ of $G$ with $\overline{D}_{\mathcal{F}}(F)>\delta$ such that
$d(gx_i,gx_j)>\delta$
for all $1\leq i<j\leq n$ and $g\in F$.

Let $\epsilon=\delta^2$. Then
\begin{equation*}
\begin{aligned}
	\limsup_{m\to\infty}\frac{1}{|F_m|}\sum_{g\in F_m}\min_{1\le i\neq j\le n} d(g x_i, g x_j)
& \geq
\limsup_{m\to\infty}\frac{1}{|F_m|}\sum_{g\in F_m\cap F}\min_{1\le i\neq j\le n} d(g x_i, g x_j) \\
 & > \lim_{m\to\infty} \sup \frac{1}{|F_m|} \sum_{g\in F_m\cap F} \delta \\
 & = \overline{D}_{\mathcal{F}}(F) \cdot \delta
 > \delta^{2}=\epsilon.
\end{aligned}
\end{equation*}

\end{proof}

Garc\'{\i}a-Ramos \cite{G17} introduced the notion mean $\mu$-sensitive pairs, and Li-Yu \cite{LY21}
studied its multi-variant version. We will introduce and study the relative mean $\mu$-sensitive tuples.

\begin{defn}
Let $\pi:(X,G)\ra (Y,G)$ be a factor map between TDSs with $\mu\in M(X,G)$ and $2\le n\in\N$. We say  that $(x_1,x_2,\dots,x_n)\in X^{(n)}\setminus \Delta_n(X)$ is a {\it relative mean $\mu$-$n$-sensitive tuple}
with respect to $\mathcal{F}$
if for any open neighbourhoods $U_i$ of $x_i$ with $i=1,2,\dots,n$, there is $\delta> 0$ such that for any $A\in \mathcal{B}_X$ with $\mu(A)>0$ there are $y_1,y_2,\dots,y_n\in A$ with $\pi(y_1)=\pi(y_2)=\cdots =\pi(y_n)$
and a subset $F$ of $G$ with $\overline{D}_{\mathcal{F}}(F)>\delta$ such that $g y_i \in U_i$ for all $i=1,2,\dots,n$ and $g\in F$.
If in addition, $x_i\neq x_j$ for each $1\leq i\neq j\leq n$,
we call it an {\it essential relative mean $\mu$-$n$-sensitive tuple} with respect to $\mathcal{F}$.
\end{defn}

We denote by $MS_{n,\pi}^\mu(X,G)$ and $MS_{n,\pi}^{\mu,e}(X,G)$ the set of all relative mean $\mu$-$n$-sensitive tuple
with respect to $\mathcal{F}$ and the set of all essential relative mean $\mu$-$n$-sensitive tuple with respect to $\mathcal{F}$, respectively.

\begin{prop}\label{prop:mu-MS=mu-MS-tuple}
Let $\pi:(X,G)\ra (Y,G)$ be a factor map between TDSs with $\mu\in M^e(X,G)$ and $2\le n\in\N$.
Then the following statements are equivalent:
\begin{enumerate}
\item[(1)] $\pi$ is relative mean $\mu$-$n$-sensitive with respect to $\mathcal{F}$;
\item[(2)] $MS_{n,\pi}^{\mu,e}(X,G)\neq\emptyset$.
\end{enumerate}
\end{prop}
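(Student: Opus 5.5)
The implication (2)$\Rightarrow$(1) is the easy direction. Fix $(x_1,\dots,x_n)\in MS_{n,\pi}^{\mu,e}(X,G)$. Since the $x_i$ are pairwise distinct, put $\eta=\min_{i\neq j}d(x_i,x_j)>0$ and choose open neighbourhoods $U_i$ of $x_i$ of radius $\eta/3$; then $d(u_i,u_j)>\eta/3$ whenever $u_i\in U_i$, $u_j\in U_j$, $i\neq j$. The definition of a relative mean $\mu$-$n$-sensitive tuple gives $\delta'>0$ with the following property: for every $A$ with $\mu(A)>0$ there are $y_1,\dots,y_n\in A$ with a common image under $\pi$, and a set $F$ with $\overline{D}_{\mathcal{F}}(F)>\delta'$ such that $gy_i\in U_i$ for all $g\in F$. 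Hence $d(gy_i,gy_j)>\eta/3$ for all $g\in F$ and $i\neq j$; in particular the $y_i$ are distinct. Taking $\delta=\min\{\delta',\eta/3\}$ verifies condition (2) of Proposition \ref{prop:eq3}, and therefore $\pi$ is relative mean $\mu$-$n$-sensitive.

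For (1)$\Rightarrow$(2), I would use a compactness argument. Assume $d\le 1$ and take $\delta$ from Proposition \ref{prop:eq3}(2). Cover $X$ by finitely many open sets $W_1,\dots,W_k$ of diameter less than $\delta/2$. For each $A$ with $\mu(A)>0$ we obtain $(y_1,\dots,y_n)\in A^{(n)}\cap R_{n,\pi}$ and $F$ with $\overline{D}_{\mathcal{F}}(F)>\delta$ such that the orbit points $gy_i$ are pairwise more than $\delta$ apart for $g\in F$. Partition $F$ according to which $n$-tuple of sets $(W_{j_1},\dots,W_{j_n})$ contains $(gy_1,\dots,gy_n)$, choosing a canonical index for each point. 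There are at most $k^n$ such index tuples, and upper density is subadditive, so some index tuple $\mathbf{j}=\mathbf{j}(A)$ carries a piece $F_{\mathbf{j}}$ with $\overline{D}_{\mathcal{F}}(F_{\mathbf{j}})>\delta/k^n$. Because points of $F$ are $\delta$-separated and each $W$ has diameter less than $\delta/2$, the sets $W_{j_1},\dots,W_{j_n}$ are pairwise disjoint and, more strongly, pairwise at distance at least $\delta/2$ apart in the appropriate sense.

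The main obstacle is producing one \emph{fixed} tuple that works for \emph{every} $A$, since $\mathbf{j}(A)$ depends on $A$. I would first attempt a contradiction argument: suppose that for each of the finitely many index tuples $\mathbf{j}$ there is a set $A_{\mathbf{j}}$ of positive measure that fails for $\mathbf{j}$ at density threshold $\delta/k^n$. The obstruction is to intersect these sets, because the intersection could have measure zero. Ergodicity is what resolves this. Replace $A_{\mathbf{j}}$ by $g_{\mathbf{j}}A_{\mathbf{j}}$, choosing the $g_{\mathbf{j}}$ so that $B=\bigcap_{\mathbf{j}} g_{\mathbf{j}}^{-1}A_{\mathbf{j}}$ has positive measure; such a choice exists by ergodicity, inductively using the fact that $\mu(\bigcup_g gC)=1$ when $\mu(C)>0$. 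Points of $B$ then lie in $g_{\mathbf{j}}^{-1}A_{\mathbf{j}}$. Failure transports along translation: $y\in g^{-1}A$ with witness set $F$ gives $gy\in A$ with witness set $Fg^{-1}$, and Lemma \ref{20260620} says $\overline{D}_{\mathcal{F}}(Fg^{-1})=\overline{D}_{\mathcal{F}}(F)$. So failure for $A_{\mathbf{j}}$ transfers to failure for $g_{\mathbf{j}}^{-1}A_{\mathbf{j}}\supseteq B$. Applying the dichotomy above to $B$ contradicts all $k^n$ failures at once.

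This yields an index tuple $\mathbf{j}$ such that every positive-measure $A$ has witnesses landing in $W_{j_1}\times\cdots\times W_{j_n}$ along a set of density greater than $\delta/k^n$. Next, refine the covers at scales $\delta/2^{\ell}$ and repeat the argument inside the current box. This produces nested closed boxes $\overline{W^{(\ell)}_{j_1}}\times\cdots\times\overline{W^{(\ell)}_{j_n}}$ with uniform positive densities $\delta_\ell$. These shrink to a point $(x_1,\dots,x_n)$. Any neighbourhoods of the $x_i$ contain some box, so the tuple lies in $MS_{n,\pi}^{\mu}$. Its coordinates are pairwise at distance at least $\delta/2$, so the tuple is essential.
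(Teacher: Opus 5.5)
Your proof is correct. Your (2)$\Rightarrow$(1) is the paper's argument, but your (1)$\Rightarrow$(2) follows a different route.

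\textbf{The paper's route.} It argues by contradiction in a single step. If $MS_{n,\pi}^{\mu,e}(X,G)=\emptyset$, then every point $\overline{x}$ of the compact set $R_{n,\pi}^\delta$ has a neighbourhood $U(\overline{x})$ with the following property: for each $b>0$ there is a positive-measure set $A_b(\overline{x})$ whose fibre tuples visit $U(\overline{x})$ with upper density at most $b$. One then takes a finite subcover, uses ergodicity to intersect translates of the sets $A_{\delta/k}(\overline{y_j})$, and applies Lemma \ref{20260620} and subadditivity of $\overline{D}_{\mathcal{F}}$. This contradicts Proposition \ref{prop:eq3}.

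\textbf{Your route.} You use the same engine: failure sets translated and intersected via ergodicity, right-translation invariance of upper density, and pigeonhole over a finite cover. But you run it positively at a sequence of scales. At each stage you prove that some sub-box of the current box is good uniformly in $A$, and you obtain the tuple as the intersection of the nested boxes.

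\textbf{What each buys.} The paper's version is shorter. Compactness of $R_{n,\pi}^\delta$ together with the negated definition localizes everything at once, and essentiality is automatic because the tuple found lies in $R_{n,\pi}^\delta$. Yours never negates the tuple definition and gives explicit densities $\delta_\ell$ at every scale. It is the measure-theoretic counterpart of the nested-set construction the paper uses for Proposition \ref{thm:ms=ms-tuple}, with ergodicity supplying the uniformity over all $A$ that pigeonholing alone cannot.

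\textbf{Two points to tidy.}
\begin{enumerate}
\item Your claim that the cells are at distance at least $\delta/2$ is too strong. With cells of diameter less than $\delta/2$ you only get $d(u,u')>\delta-2\max_j\diam(W_j)>0$ for $u\in\overline{W_{j_i}}$ and $u'\in\overline{W_{j_{i'}}}$. This still gives essentiality of the limit tuple; alternatively, take diameter less than $\delta/4$.
\item In the first-stage contradiction, failure must be measured on the separated return times (your pieces $F_{\mathbf{j}}$). Equivalently, $\mathbf{j}$ should range only over boxes that contain a $\delta$-separated tuple. If failure were measured on all return times $\{g: (gy_1,\dots,gy_n)\in W_{j_1}\times\cdots\times W_{j_n}\}$, a box like $W_1\times\cdots\times W_1$ could survive through diagonal witnesses $y_1=\cdots=y_n\in A$. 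The limit tuple could then land on $\Delta_n(X)$.
\end{enumerate}
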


\begin{proof}

(1)$\Rightarrow$(2)
Since $\pi$ is relative mean $\mu$-$n$-sensitive with respect to $\mathcal{F}$, by Proposition \ref{prop:eq3}
there is $\delta>0$ such that
for any $A\in \mathcal{B}_X$ with $\mu(A)>0$
there are $(x_1,x_2,\dots,x_n)\in A^{(n)}\cap R_{n,\pi}$
and a subset $F$ of $G$ with $\overline{D}_{\mathcal{F}}(F)>\delta$ such that
$d(gx_i,gx_j)>\delta$ for all $1\leq i<j\leq n$ and $g\in F$.

Let
$$
R_{n,\pi}^\delta=\{ (x_1,x_2,\dots,x_n)\in R_{n,\pi}: \min_{1\le i<j\le n}
d(x_i,x_j)\ge \delta\}.
$$
From the continuity of $d$, $R_{n,\pi}^\delta$ is closed in $R_{n,\pi}$. If $MS_{n,\pi}^{\mu,e}(X, G)=\emptyset$, then for any $\overline{x}=(x_1,x_2,\dots,x_n)\in R_{n,\pi}^\delta$, there exist open neighborhoods
$U(\overline{x})$ of $\overline{x}$  such that for any $b>0$
there is a compact subset $A_b(\overline{x})\subseteq X$ with $\mu(A_b(\overline{x}))>0$ and
$$\overline{D}_{\mathcal{F}}(\{g\in G: (gz_1,gz_2,\dots,gz_n)\in
U(\overline{x})\})\leq b,$$
for any $z_1,z_2,\dots,z_n\in A_b(\overline{x})$ with $\pi(z_1)=\pi(z_2)=\cdots =\pi(z_n)$.

Since $\{ U(\overline{x}):\overline{x}\in R_{n,\pi}^\delta\}$ is
an open cover of $R_{n,\pi}^\delta$,  there exist
$\overline{y_1},\overline{y_2},\dots,\overline{y_k}\in
R_{n,\pi}^\delta$ such that
$\cup_{j=1}^k  U(\overline{y_j})\supseteq
R_{n,\pi}^\delta.$
Let $b_0=\frac{\delta}{k}$.
Since $\mu$ is ergodic, there exist $g_1,g_2\cdots,g_k\in G$ such that $A=\cap_{j=1}^k
g_jA_{b_0}(\overline{y_j})$ is compact with $\mu(A)>0$. Then for any $1\le j\le k$ and
any $z_1,z_2,\dots,z_n\in A$ with $\pi(z_1)=\pi(z_2)=\cdots =\pi(z_n)$,
\begin{align*}
&(\{g\in G:(gz_1,gz_2,\dots,gz_n)\in U(\overline{y_j})\})\\
&=(\{g\in G:gg_j(g_j^{-1}z_1,g_j^{-1}z_2,\dots,g_j^{-1}z_n)\in U(\overline{y_j})\})\\
&=(\{h\in G:h(g_j^{-1}z_1,g_j^{-1}z_2,\dots,g_j^{-1}z_n)\in U(\overline{y_j})\})g_j^{-1}
\end{align*}

By Lemma \ref{20260620} we have
\begin{align*}
&\overline{D}_{\mathcal{F}}(\{g\in G:(gz_1,gz_2,\dots,gz_n)\in
U(\overline{y_j})\})\\
&=\overline{D}_{\mathcal{F}}(\{h\in G:h(g_j^{-1}z_1,g_j^{-1}z_2,\dots,g_j^{-1}z_n)\in
U(\overline{y_j})\})\\
&\leq b_0=\frac{\delta}{k}.
\end{align*}

Since $\pi$ is relative mean $\mu$-$n$-sensitive,
by Proposition \ref{prop:eq3}
there exist $v_1,v_2,\dots,v_n\in A$ with $\pi(v_1)=\pi(v_2)=\cdots =\pi(v_n)$ such that
$$\overline{D}_{\mathcal{F}}(\{g\in G:(gv_1,gv_2,\dots,gv_n)\in
R_{n,\pi}^\delta \})> \delta$$
and so
$$
\overline{D}_{\mathcal{F}}(\{g\in G:(gv_1,gv_2,\dots,gv_n)\in
\cup_{j=1}^k  U(\overline{y_j})\})> \delta.
$$
This implies that there is  $1\le j\le k$ such that
$$
\overline{D}_{\mathcal{F}}(\{g\in G:(gv_1,gv_2,\dots,gv_n)\in
U(\overline{y_j})\})> \frac{\delta}{k},
$$
a contradiction. Hence $MS_{n,\pi}^{\mu,e}(X,G)\neq\emptyset$.

(2)$\Rightarrow$(1)
Let $(x_1,\dots,x_n)\in MS_{n,\pi}^{\mu,e}(X,G)$ and $0<\delta_1=\min_{1 \leq i \neq j \leq m} d(x_{i}, x_{j})$.
There exist open neighborhood $\prod_{i=1}^n U_i$ of $(x_1,\dots,x_n)$
such that $\min_{1\le i\neq j\le n}d(U_i,U_j)>\frac{1}{2}\delta_1$.
Since $(x_1,\dots,x_n)\in MS_{n,\pi}^{\mu,e}(X,G)$,
there exists $\delta_2>0$ such that for any $A\in \mathcal{B}_X$ with $\mu(A)>0$ there are $y_1,y_2,\dots,y_n\in A$ with $\pi(y_1)=\pi(y_2)=\cdots =\pi(y_n)$
and a subset $F$ of $G$ with $\overline{D}_{\mathcal{F}}(F)>\delta_2$ such that $g y_i \in U_i$ for all $i=1,2,\dots,n$ and $k\in F$.
Let $\delta=\min\{\frac{1}{2}\delta_1,\delta_2\}$.
Then $\min_{1\le i\neq j\le n}d(g y_i,g y_j)\ge\min_{1\le i\neq j\le n}d(U_i,U_j)> \delta$ for all $g\in F$
with $\overline{D}_{\mathcal{F}}(F)>\delta$.
By Proposition \ref{prop:eq3} $\pi$ is relative mean $\mu$-$n$-sensitive with respect to $\mathcal{F}$.

\end{proof}

Next we explore the relation between relative entropy tuples and relative mean sensitive tuples for a measure.

\begin{thm}\label{thm:entr-tuple--ms-tuple}
Let $\pi:(X,G)\ra (Y,G)$ be a factor map between TDSs with $\mu\in M^e(X,G)$ and $2\le n\in\N$.
Then $E_{n,\pi}^{\mu}(X,G)\subset MS_{n,\pi}^{\mu}(X,G)$.
\end{thm}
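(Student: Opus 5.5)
Take $(x_1,\dots,x_n)\in E_{n,\pi}^{\mu}(X,G)$ and open neighbourhoods $U_i$ of $x_i$. By Lemma \ref{lem:tuple-supp}, $(x_1,\dots,x_n)\in\supp(\lambda_{n,\pi}^\mu)$, so $a:=\lambda_{n,\pi}^\mu(\prod_{i=1}^n U_i)>0$. We may assume $h_\mu(G|\pi)>0$; otherwise $P_\mu(\pi^{-1}\mathcal{B}_Y)=\mathcal{B}_X$ mod $\mu$, so $\lambda_{n,\pi}^\mu$ lives on $\Delta_n(X)$ and $E_{n,\pi}^\mu(X,G)=\emptyset$. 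Then Lemma \ref{lem:tuple-measure} says that $\lambda_{n,\pi}^\mu$ is ergodic. The candidate constant is $\delta=a/2$, and the goal is this: for any $A\in\mathcal{B}_X$ with $\mu(A)>0$, find $(y_1,\dots,y_n)\in A^{(n)}\cap R_{n,\pi}$ whose $G$-orbit visits $\prod U_i$ along a set of upper density $>\delta$.

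The natural way to get such tuples is the pointwise ergodic theorem for the ergodic measure $\lambda_{n,\pi}^\mu$. Along a tempered F{\o}lner subsequence of $\mathcal{F}$ (Lindenstrauss), $\lambda_{n,\pi}^\mu$-a.e. $\bar z$ satisfies
$$\lim \frac{1}{|F_{m_k}|}\sum_{g\in F_{m_k}}1_{\prod U_i}(g\bar z)=a.$$
In particular, $\overline{D}_{\mathcal{F}}(\{g: g\bar z\in\prod U_i\})\ge a>\delta$, because a limit along a subsequence bounds the limsup from below. This avoids needing $\mathcal{F}$ itself to be tempered. It then remains to produce a generic point $\bar z$ lying in $A^{(n)}\cap R_{n,\pi}$.

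Two facts about $\lambda_{n,\pi}^\mu$ give this. First, $\lambda_{n,\pi}^\mu(R_{n,\pi})=1$. Indeed, for $B\in\mathcal{B}_Y$ the set $\pi^{-1}B$ is $P_\mu(\pi^{-1}\mathcal{B}_Y)$-measurable, so the conditional measures $\mu_x$ from the disintegration over the Pinsker algebra are supported on fibres $\pi^{-1}(\pi(x))$. Since $\lambda_{n,\pi}^\mu=\int\mu_x^{(n)}\,d\mu(x)$, it follows that $\lambda_{n,\pi}^\mu$ is carried by $R_{n,\pi}$. Second, $\lambda_{n,\pi}^\mu(A^{(n)})=\int \mathbb{E}(1_A|P_\mu(\pi^{-1}\mathcal{B}_Y))^n d\mu>0$, since this conditional expectation is positive on a set of positive measure whenever $\mu(A)>0$. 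So the generic points of full measure meet $A^{(n)}\cap R_{n,\pi}$ in a set of positive measure. Choosing $\bar z=(y_1,\dots,y_n)$ there and setting $F=\{g: g\bar z\in\prod U_i\}$ gives $\overline{D}_{\mathcal{F}}(F)>\delta$ and $gy_i\in U_i$ for $g\in F$. The definition seems to want the $y_i$ distinct, but this is automatic off the null set $\Delta_n'(X)$ given by Lemma \ref{lem:tuple-measure}. Hence $(x_1,\dots,x_n)\in MS_{n,\pi}^\mu(X,G)$.

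The main obstacle is the ergodic theorem step. For an arbitrary F{\o}lner sequence $\mathcal{F}$, pointwise convergence can fail, so I would first try the tempered-subsequence argument above. If that causes trouble, the fallback is the mean ergodic theorem: the averages converge to $a$ in $L^2(\lambda_{n,\pi}^\mu)$, so some subsequence converges a.e., and that also suffices since only upper density is needed. Checking that $\lambda_{n,\pi}^\mu$ is carried by $R_{n,\pi}$ is routine but must be stated.
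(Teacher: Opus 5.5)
Your proof is correct and follows the same route as the paper: you place the entropy tuple in $\supp(\lambda_{n,\pi}^\mu)$, apply the pointwise ergodic theorem to the ergodic measure $\lambda_{n,\pi}^\mu$, and use $\lambda_{n,\pi}^\mu(A^{(n)}\cap R_{n,\pi})>0$ to pick a generic tuple in $A^{(n)}\cap R_{n,\pi}$. You also handle two points the paper passes over: you pass to a tempered subsequence of $\mathcal{F}$ (enough, since only upper density is needed) where the paper applies Lindenstrauss's theorem to a possibly non-tempered $\mathcal{F}$, and you verify $\lambda_{n,\pi}^\mu(R_{n,\pi})=1$ directly where the paper takes $E_{n,\pi}^{\mu}(X,G)\subset R_{n,\pi}$ for granted.
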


\begin{proof}
If $h_{\mu}(G|\pi)=0$, then $\emptyset=E_{n,\pi}^{\mu}(X,G)\subset MS_{n,\pi}^{\mu}(X,G)$.

We now assume $h_{\mu}(G|\pi)>0$.
Define the measure $\lambda_{n,\pi}^\mu$ on $\mathcal{B}_X^{(n)}$ with respect to $\pi$ by $$\lambda_{n,\pi}^\mu(\prod_{i=1}^nA_i)
=\int_{X}\prod_{i=1}^n\mathbb{E}(1_{A_i}|\mathcal{P}_\mu(\pi^{-1}\mathcal{B}_Y))(x)\mathrm{d}\mu(x)$$
for every $\prod_{i=1}^nA_i\in \mathcal{B}_X^{(n)}$,
where $P_{\mu}(\pi^{-1}\mathcal{B}_Y)$ is the relative Pinsker $\sigma$-algebra.

Since $h_{\mu}(G|\pi)>0$ and $\mu\in M^e(X,G)$,
by Lemma \ref{lem:tuple-supp} and  Lemma \ref{lem:tuple-measure},
$\lambda_{n,\pi}^\mu$ is an ergodic measure on $X^{(n)}$ and
$\supp(\lambda_{n,\pi}^\mu)\setminus \Delta_n(X) = E_{n,\pi}^{\mu}(X,G)\subset R_{n,\pi}$.
Let $(x_1,\dots, x_n)\in E_{n,\pi}^{\mu}(X,G)$, then $(x_1,\dots, x_n)\in \supp(\lambda_{n,\pi}^\mu)$.
For any $\tau>0$, we have $\lambda_{n,\pi}^\mu(\prod_{i=1}^n B(x_i,\tau))>0$.

Since $\lambda_{n,\pi}^\mu$ is ergodic, by the Lindenstrauss pointwise ergodic theorem \cite{L2001},
for $\lambda_{n,\pi}^\mu$-a.e. $(y_1,\dots, y_n)\in X^{(n)}$,
$$
\lim_{m\to \infty}\frac{1}{|F_m|}|\{g\in F_m:(gy_1,\cdots, gy_n)\in \prod_{i=1}^n B(x_i,\tau)\}|
=\lambda_{n,\pi}^\mu(\prod_{i=1}^n B(x_i,\tau)).
$$
For any $A\in \mathcal{B}_X$ with $\mu(A)>0$,
we have $$\mu(A)=\lambda_{1,\pi}^\mu(A)=\int_X\mathbb{E}(1_{A}|\mathcal{P}_\mu(\pi^{-1}\mathcal{B}_Y))(x)\textrm{d}\mu(x)>0.$$
From the definition of conditional expectation,
for $\mu$-a.e. $\mathbb{E}(1_{A}|\mathcal{P}_\mu(\pi^{-1}\mathcal{B}_Y))\ge 0$.
Let $B=\{x\in X:\mathbb{E}(1_{A}|\mathcal{P}_\mu(\pi^{-1}\mathcal{B}_Y))(x)>0\}$.
Then $\mu(B)>0$, we have
$$\lambda_{n,\pi}^\mu(A^{(n)})
=\int_{X}[\mathbb{E}(1_{A}|\mathcal{P}_\mu(\pi^{-1}\mathcal{B}_Y))(x)]^n\mathrm{d}\mu(x)
\ge \int_{B}[\mathbb{E}(1_{A}|\mathcal{P}_\mu(\pi^{-1}\mathcal{B}_Y))(x)]^n\mathrm{d}\mu(x)>0.$$
Since $\supp(\lambda_{n,\pi}^\mu) \subset E_{n,\pi}^{\mu}(X,G)\cup \Delta_n(X)\subset R_{n,\pi}$, we have
$\lambda_{n,\pi}^\mu(A^{(n)}\cap R_{n,\pi})>0$.
So there are $y_1^\prime,\dots, y_n^\prime\in A$  with $\pi(y_1')=\pi(y_2')=\cdots =\pi(y_n')$ such that
$$\lim_{m\to \infty}\frac{1}{|F_m|}|\{g\in F_m:gy_i'\in B(x_i,\tau), i=1,2,\dots,n\}|
=\lambda_{n,\pi}^\mu(\prod_{i=1}^n B(x_i,\tau))>0.$$
That implies that $(x_1,\dots, x_n)\in MS_{n,\pi}^{\mu}(X,G)$, completing the proof.
\end{proof}

\begin{cor}
Let $\pi:(X,G)\ra (Y,G)$ be a factor map between TDSs with $\mu\in M^e(X,G)$ and $h_\mu(G| \pi)>0$.
Then $\pi$ is relative mean $\mu$-$n$-sensitive for every $2\le n\in\N$.
\end{cor}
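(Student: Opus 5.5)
The plan is to combine Theorem \ref{thm:entr-tuple--ms-tuple} with Proposition \ref{prop:mu-MS=mu-MS-tuple}. By the latter, since $\mu$ is ergodic, it suffices to show that $MS_{n,\pi}^{\mu,e}(X,G)\neq\emptyset$, i.e. to exhibit a relative mean $\mu$-$n$-sensitive tuple with pairwise distinct coordinates. By Theorem \ref{thm:entr-tuple--ms-tuple}, $E_{n,\pi}^{\mu}(X,G)\subset MS_{n,\pi}^{\mu}(X,G)$. Hence $E_{n,\pi}^{\mu,e}(X,G)\subset MS_{n,\pi}^{\mu,e}(X,G)$, because "essential" is the same coordinatewise condition in both definitions. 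So the whole task reduces to producing one essential relative entropy $\mu$-$n$-tuple.

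For this I will use the structure of $\lambda_{n,\pi}^\mu$. Since $h_\mu(G|\pi)>0$ and $\mu$ is ergodic, Lemma \ref{lem:tuple-measure} gives $\lambda_{n,\pi}^\mu(\Delta_n^\prime(X))=0$. As $\lambda_{n,\pi}^\mu$ is a probability measure, $\lambda_{n,\pi}^\mu(X^{(n)}\setminus\Delta_n^\prime(X))=1$. The set $X^{(n)}\setminus\Delta_n^\prime(X)$ is open, and any open set of full measure meets the support. Therefore there is a point $(x_1,\dots,x_n)\in\supp(\lambda_{n,\pi}^\mu)$ with $x_i\neq x_j$ for all $i\neq j$.

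Because $n\ge 2$, this point is not on the diagonal $\Delta_n(X)$. By Lemma \ref{lem:tuple-supp}, $E_{n,\pi}^{\mu}(X,G)=\supp(\lambda_{n,\pi}^\mu)\setminus\Delta_n(X)$, so $(x_1,\dots,x_n)\in E_{n,\pi}^{\mu,e}(X,G)$. Then $(x_1,\dots,x_n)\in MS_{n,\pi}^{\mu,e}(X,G)$, and Proposition \ref{prop:mu-MS=mu-MS-tuple} ((2)$\Rightarrow$(1)) shows that $\pi$ is relative mean $\mu$-$n$-sensitive. Since $n\ge2$ was arbitrary, the claim holds for every $n$.

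The only delicate point is the nonemptiness of essential entropy tuples. This rests entirely on the non-atomicity statement $\lambda_{n,\pi}^\mu(\Delta_n^\prime(X))=0$ from Lemma \ref{lem:tuple-measure}, which ultimately comes from Lemma \ref{bl}(1). Everything else is a formal combination of results already stated in the paper.
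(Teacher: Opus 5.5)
Your proposal is correct and follows the paper's proof: Lemma~\ref{lem:tuple-measure} yields an essential relative entropy $\mu$-$n$-tuple, Theorem~\ref{thm:entr-tuple--ms-tuple} places it in $MS_{n,\pi}^{\mu,e}(X,G)$, and Proposition~\ref{prop:mu-MS=mu-MS-tuple} gives relative mean $\mu$-$n$-sensitivity. The only difference is that you make explicit a step the paper leaves implicit: $\lambda_{n,\pi}^\mu(\Delta_n'(X))=0$ forces $\supp(\lambda_{n,\pi}^\mu)$ to meet $X^{(n)}\setminus\Delta_n'(X)$, and Lemma~\ref{lem:tuple-supp} then turns such a point into an essential tuple.
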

\begin{proof}
Since $h_\mu(G| \pi)>0$ and $\mu\in M^e(X,G)$,
by Lemma \ref{lem:tuple-measure} $E_{n,\pi}^{\mu,e}(X,G)\not=\emptyset$ for any $2\le n\in\N$.
Since $\mu\in M^e(X,G)$, by Theorem \ref{thm:entr-tuple--ms-tuple} $MS_{n,\pi}^{\mu,e}(X,G)\not=\emptyset$.
By Proposition \ref{prop:mu-MS=mu-MS-tuple} $\pi$ is relative mean $\mu$-$n$-sensitive for every $2\le n\in\N$.

\end{proof}

\section{relative mean sensitivity and relative entropy}\label{sect:MS-tuple:topological}

In this section we build the relationship between relative mean sensitivity and relative entropy.

Let $\pi:(X,G)\ra (Y,G)$ be a factor map and $2\le n\in\N$.
We say that $\pi$ is {\it relative mean $n$-sensitive} with respect to $\mathcal{F}$ if there is $\ep>0$ such that
any non-empty open subset $U$ of $X$ there are $n$ distinct points $x_1,x_2,\dots,x_n\in U$
with $\pi(x_1)=\pi(x_2)=\cdots =\pi(x_n)$ and
$$
\limsup_{m\to\infty}\frac{1}{|F_m|}\sum_{g\in F_m}\min_{1\le i\neq j\le n} d(g x_i, g x_j)>\ep.
$$

The following proposition is an observation concerning the relative mean sensitivity. As the proof of this observation follows a similar approach to Proposition \ref{prop:eq3}, we omit the details.

\begin{prop}\label{prop:eq4}
Let $\pi:(X,G)\ra (Y,G)$ be a factor map and $2\le n\in\N$.
Then the following conditions are equivalent:
\begin{enumerate}
  \item $\pi$ is relative mean $n$-sensitive with respect to $\mathcal{F}$;
  \item there is $\delta>0$ such that for any non-empty open subset $U$ of $X$
there are $(x_1,\dots,x_n)\in U^{(n)}\cap R_{n,\pi}$
and a subset $F$ of $G$ with $\overline{D}(F)>\delta$ such that
$d(gx_i,gx_j)>\delta$
for all $1\leq i<j\leq n$ and $g\in F$.
\end{enumerate}
\end{prop}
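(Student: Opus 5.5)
The plan is to repeat the argument of Proposition \ref{prop:eq3} almost verbatim. The only change is that the measurable set $A$ with $\mu(A)>0$ is replaced by a non-empty open set $U\subseteq X$. The quantifier structure of both conditions is identical, and the proof of Proposition \ref{prop:eq3} never used the measure. So the same numerical estimates transfer directly.

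For (1)$\Rightarrow$(2), I would first rescale the metric so that $d(x,y)\le 1$ for all $x,y\in X$; this changes nothing qualitatively. Take the $\ep>0$ from relative mean $n$-sensitivity and set $\delta=\ep/2$. Given a non-empty open $U$, choose distinct $x_1,\dots,x_n\in U$ with a common image under $\pi$, so that $(x_1,\dots,x_n)\in U^{(n)}\cap R_{n,\pi}$, and whose mean minimal distance has $\limsup$ larger than $\ep$. Put
\[
F=\{g\in G:\min_{1\le i\neq j\le n} d(gx_i,gx_j)>\delta\}.
\]
Split each Ces\`aro sum over $F_m$ into the parts over $F_m\cap F$ and $F_m\setminus F$. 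Bounding the summand by $1$ on the first part and by $\delta$ on the second gives
\[
2\delta=\ep<\overline{D}_{\mathcal F}(F)+\delta,
\]
so $\overline{D}_{\mathcal F}(F)>\delta$. On $F$ we have $d(gx_i,gx_j)>\delta$ for all $i<j$ by construction.

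For (2)$\Rightarrow$(1), take the $\delta$ from (2) and set $\ep=\delta^2$. For a non-empty open $U$, the tuple $(x_1,\dots,x_n)\in U^{(n)}\cap R_{n,\pi}$ supplied by (2) has pairwise distinct coordinates, since $d(gx_i,gx_j)>\delta>0$ for $g\in F$ and $F\neq\emptyset$. Its coordinates share the same $\pi$-image because the tuple lies in $R_{n,\pi}$. Restricting the Ces\`aro sum to $F_m\cap F$, where every summand exceeds $\delta$, gives a $\limsup$ of at least $\delta\,\overline{D}_{\mathcal F}(F)>\delta^2=\ep$.

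No step is a genuine obstacle. The points needing care are the normalization $\diam X\le 1$ in the first direction (otherwise use $\delta=\ep/(1+\diam X)$, or a similar choice) and checking that the points produced in (2) are distinct, which holds as noted above. Beyond that the proof is routine bookkeeping with upper densities along $\mathcal F$.
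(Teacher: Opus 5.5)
Your proposal is correct and takes the same route as the paper. The paper omits this proof, saying it follows Proposition~\ref{prop:eq3}. Your two estimates ($\delta=\ep/2$ after normalizing $\diam X\le 1$, and $\ep=\delta^2$) are exactly those of that proof, with $A$ replaced by the open set $U$. You also check that the points supplied by (2) are pairwise distinct, since $F\neq\emptyset$; the paper leaves this implicit.
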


Li, Ye and Yu \cite{LYY22} introduced the notion mean sensitive tuples.
We will introduce and study the relative mean sensitive tuples.

\begin{defn}
Let $\pi:(X,G)\ra (Y,G)$ be a factor map and $2\le n\in\N$.
We say  that $(x_1,x_2,\dots,x_n)\in X^{(n)}\setminus \Delta_n(X)$
is a \textit{relative mean $n$-sensitive tuple} with respect to $\mathcal{F}$ if for any open neighbourhoods $U_i$ of $x_i$ with $i=1,2,\dots,n$,
there is $\delta> 0$ such that for any non-empty open subset $U$ of $X$,
 there are $(y_1,y_2,\dots,y_n)\in U^{(n)}\cap R_{n,\pi}$
and a subset $F$ of $G$ with $\overline{D}_{\mathcal{F}}(F)>\delta$ such that $g y_i \in U_i$ for all $i=1,2,\dots,n$ and $g\in F$.
If in addition, $x_i\neq x_j$ for each $1\leq i\neq j\leq n$,
we call it an {\it essential relative mean $n$-sensitive tuple}.
\end{defn}

We denote by $MS_{n,\pi}(X,G)$ and $MS_{n,\pi}^e(X,G)$ the set of all relative mean $n$-sensitive tuple
and the set of all essential relative mean $n$-sensitive tuple, respectively.

\begin{prop}\label{thm:ms=ms-tuple}
Let $\pi:(X,G)\ra (Y,G)$ be a factor map between transitive TDSs with $2\le n\in \N$.
Then the following conditions are equivalent:
\begin{enumerate}
  \item $\pi$ is relative mean $n$-sensitive with respect to $\mathcal{F}$;
  \item $MS_{n,\pi}^e(X,G)\neq \emptyset$.
\end{enumerate}
\end{prop}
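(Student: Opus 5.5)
The plan mirrors the proof of Proposition \ref{prop:mu-MS=mu-MS-tuple}, with transitivity taking the place of ergodicity when we intersect finitely many ``bad'' sets.

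For (2)$\Rightarrow$(1), the argument is the same as in Proposition \ref{prop:mu-MS=mu-MS-tuple}. Fix $(x_1,\dots,x_n)\in MS_{n,\pi}^e(X,G)$ and put $\delta_1=\min_{i\neq j}d(x_i,x_j)>0$. Choose open neighbourhoods $U_i\ni x_i$ with $\min_{i\neq j}d(U_i,U_j)>\delta_1/2$. Let $\delta_2$ be the constant given by the definition of a relative mean $n$-sensitive tuple, and set $\delta=\min\{\delta_1/2,\delta_2\}$. Then every non-empty open $U$ contains a tuple in $U^{(n)}\cap R_{n,\pi}$ whose $g$-images are pairwise $\delta$-separated for all $g$ in a set of upper density $>\delta$. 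Since the points are separated, they are distinct. Proposition \ref{prop:eq4} then gives (1).

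For (1)$\Rightarrow$(2), I will argue by contradiction. Take $\delta$ from Proposition \ref{prop:eq4}, and let $R_{n,\pi}^\delta$ be the compact set of tuples in $R_{n,\pi}$ with all pairwise distances at least $\delta$. Assume $MS_{n,\pi}^e(X,G)=\emptyset$. Every point of $R_{n,\pi}^\delta$ has pairwise distinct coordinates, so it fails the tuple condition. Hence each $\overline{x}\in R_{n,\pi}^\delta$ has a product neighbourhood $U(\overline{x})$ such that for every $b>0$ there is a non-empty open set $W_b(\overline{x})$ with the following property: every tuple $(z_i)\in W_b(\overline{x})^{(n)}\cap R_{n,\pi}$ satisfies
\[
\overline{D}_{\mathcal F}(\{g:(gz_i)\in U(\overline{x})\})\le b.
\]
By compactness, finitely many neighbourhoods $U(\overline{y_1}),\dots,U(\overline{y_k})$ cover $R_{n,\pi}^\delta$. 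Put $b_0=\delta/k$.

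The key step, and the main obstacle, is to produce a non-empty open set $W$ and elements $g_1,\dots,g_k$ with $g_jW\subseteq W_{b_0}(\overline{y_j})$ for every $j$. Transitivity of $(X,G)$ only lets us intersect two open sets at a time. I would proceed inductively: set $W^{(1)}=W_{b_0}(\overline{y_1})$ and $g_1=e_G$. Given $W^{(j)}$, pick $h$ with $W^{(j)}\cap h^{-1}W_{b_0}(\overline{y_{j+1}})\neq\emptyset$, which is possible since $N(U,V)\neq\emptyset$. Set $W^{(j+1)}$ to be this intersection and $g_{j+1}=h$. The earlier $g_j$ are kept, since the new set is a subset of the old one.

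The resulting $W=W^{(k)}$ is open and non-empty, with $g_jW\subseteq W_{b_0}(\overline{y_j})$. For any tuple $(z_i)\in W^{(n)}\cap R_{n,\pi}$, the tuple $(g_jz_i)$ also lies in $R_{n,\pi}$, because $\pi$ is equivariant. Moreover
\[
\{g:(gz_i)\in U(\overline{y_j})\}=\{h:(hg_jz_i)\in U(\overline{y_j})\}\,g_j .
\]
By Lemma \ref{20260620} this set has upper density at most $b_0$.

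Finally, apply Proposition \ref{prop:eq4} to $W$. This gives a tuple in $W^{(n)}\cap R_{n,\pi}$ whose orbit lies in $R_{n,\pi}^\delta\subseteq\bigcup_j U(\overline{y_j})$ along a set of upper density $>\delta$. By subadditivity of $\overline{D}_{\mathcal F}$, some $j$ then has density $>\delta/k=b_0$, which is a contradiction. Transitivity of $Y$ is not needed beyond being inherited; only the transitivity of $X$ is used.
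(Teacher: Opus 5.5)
Your proof is correct. For (2)$\Rightarrow$(1) it coincides with the paper's argument, and it also fixes the small slips there: you require $d(U_i,U_j)>\delta_1/2$ rather than $=$, and you take $g\in F$ rather than $g\in G$.

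For (1)$\Rightarrow$(2) you take a genuinely different route. The paper argues constructively:
\begin{enumerate}
\item It fixes a transitive point $x$ and takes separated fibre tuples $x_m^1,\dots,x_m^n$ in the balls $W_m=B(x,1/m)$.
\item It repeatedly splits $R_{n,\pi}^\delta$ into finitely many closed pieces of diameter $<1/l$. By pigeonhole, and by implicitly passing to subsequences in $m$ (hidden under ``without loss of generality''), it obtains a nested sequence $A_l^1$ visited with upper density $>\delta/(N_1\cdots N_l)$.
\item The essential tuple is the intersection point $(z_1,\dots,z_n)$. It is moved into an arbitrary open $U$ by an $h$ with $hx\in U$, together with Lemma \ref{20260620}.
\end{enumerate}
You instead argue by contradiction. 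You use compactness of $R_{n,\pi}^\delta$ and the fact that transitivity lets one pull finitely many non-empty open sets back into a common non-empty open set $W$, with $g_jW\subseteq W_{b_0}(\overline{y_j})$. This is exactly the topological counterpart of the ergodicity step in Proposition \ref{prop:mu-MS=mu-MS-tuple}.

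Your version buys three things: it uses only the defining property $N(U,V)\neq\emptyset$, with no need for a transitive point via the Baire category theorem; it avoids the diagonal subsequence bookkeeping; and it treats the measure-theoretic and topological cases uniformly. The paper's version is more explicit: it exhibits the tuple as the limit of nested sets, with an explicit density constant $\delta/(N_1\cdots N_l)$ for each neighbourhood. Both arguments locate the essential tuple inside $R_{n,\pi}^\delta$, and both use only the transitivity of $X$.
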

\begin{proof}
(1)$\Rightarrow$(2)
Since $\pi$ is relative mean $n$-sensitive,
by Proposition \ref{prop:eq4}
for any non-empty open subset $U$ of $X$ there exist $x_1,\dots,x_n\in U$ with $\pi(x_1)=\pi(x_2)=\cdots=\pi(x_n)$
and a subset $F$ of $G$ with $\overline{D}_{\mathcal{F}}(F)>\delta$ such that
$\underset{ 1\leq i\not=j \leq n}{\min}d(gx_i,gx_j)>\delta$
for all $g\in F$.
Put
$$
R_{n,\pi}^\delta=\{ (x_1,x_2,\dots,x_n)\in R_{n,\pi}: \min_{1\le i<j\le n}
d(x_i,x_j)\ge \delta\}.
$$
From the continuity of $d$, $R_{n,\pi}^\delta$ is a compact subset of $R_{n,\pi}$.
Let $x\in X$ be a transitive point. For each $m\in\N$,  denote $W_m=B(x,\frac{1}{m})$.
Then there are $x_m^1,\dots,x_m^n\in W_m$ with $\pi(x_m^1)=\pi(x_m^2)=\cdots=\pi(x_m^n)$ such that
$$
\overline{D}_{\mathcal{F}}(\{g\in G: (gx_m^1,\dots, gx_m^n)\in R_{n,\pi}^\delta\})>\delta.
$$
Since $R_{n,\pi}^\delta$ is a closed set,
$R_{n,\pi}^\delta$ can be expressed as a finite union of non-empty closed sets of diameter less than $1$,
i.e., $R_{n,\pi}^\delta = \cup_{i=1}^{N_1}A_1^i$ and $\diam(A_1^i)<1$.
Then for each $m\in \N$ there is $1\leq N_{1,m}\leq N_1$ such that
$$\overline{D}_{\mathcal{F}}(\{g\in G: (gx_m^1,\dots, gx_m^n)\in A_1^{N_{1,m}} \})> \frac{\delta}{N_1}.$$
Without loss of generality we assume $N_{1,m}=1$ for all $m\in \N$ and so
$$
\overline{D}_{\mathcal{F}}(\{g\in G: (gx_m^1,\dots, gx_m^n)\in A_1^{1}\}) >\frac{\delta}{N_1}.
$$

By induction, assume for $1\le p\le l$ we have non-empty closed set $A_p^{1}$ with
$A_l^{1}\subset A_{l-1}^{1}\subset \cdots \subset A_{1}^{1} \subset R_{n,\pi}^\delta$,
and for all $m\in \N$ we have
$$
\overline{D}_{\mathcal{F}}(\{g\in G: (gx_m^1,\dots, gx_m^n)\in A_p^{1}\}) >\frac{\delta}{N_1N_2\cdots N_{p}}.
$$

For $p=l+1$,
since $A_l^{1}$ is a closed set,
$A_l^{1}$ can be expressed as a finite union of non-empty closed sets of diameter less than $\frac{1}{l+1}$,
i.e., $A_l^{1}= \cup_{i=1}^{N_{l+1}}A_{l+1}^i$ and $\diam(A_{l+1}^i)<\frac{1}{l+1}$.
Then for each $m\in \N$ there is $1\leq N_{l+1,m}\leq N_{l+1}$ such that
$$
\overline{D}_{\mathcal{F}}(\{g\in G: (gx_m^1,\dots, gx_m^n)\in A_{l+1}^{N_{l+1,m}} \}) > \frac{\delta}{N_1N_2\cdots N_{l+1}}.
$$
Without loss of generality we assume $N_{l+1,m}=1$ for all $m\in \N$ and so
$$
\overline{D}_{\mathcal{F}}(\{g\in G: (gx_m^1,\dots, gx_m^n)\in A_{l+1}^{1}  \}) > \frac{\delta}{N_1N_2\cdots N_{l+1}}.
$$

Since $A_p^1$ are non-empty closed sets with $A_{p+1}^1\subset A_p^1$ and $\lim_{p\to \infty}\diam(A_{p}^1)=0$,
 there is a unique point $(z_1,\dots,z_n)\in \bigcap_{p=1}^{\infty} A_p^{1}\subset R_{n,\pi}^\delta$.
We claim that $(z_1,\dots,z_n)\in MS_{n,\pi}^e(X, G)$.
In fact, let $V$ be an open neighborhood of $(z_1,\dots,z_n)$,
and there is $l\in \N$ such that $A_{l}^{1} \subset V$.
By the construction for any $m$, there are $x_m^1,\dots, x_m^n\in W_m$ such that
$$
\overline{D}_{\mathcal{F}}(\{g\in G: (gx_m^1,\dots, gx_m^n)\in A_{l}^{1}  \}) > \frac{\delta}{N_1N_2\cdots N_{l}}
$$
and so
$$
\overline{D}_{\mathcal{F}}(\{g\in G: (gx_m^1,\dots, gx_m^n)\in V \})
> \frac{\delta}{N_1N_2\cdots N_{l}}
$$
for all $m\in \N$.
For any non-empty open set $U\subset X$,  since $x$ is a transitive point, there is $h\in G$ such that $hx\in U$. We can choose $m_0\in\N$ such that
$hW_{m_0}\subset U$. This implies that $hx_{m_0}^1,\cdots, hx_{m_0}^n\in U$, then
$$\{g\in G: (g(hx_{m_0}^1),\dots, g(hx_{m_0}^n))\in V\}
=\{q\in G: (qx_{m_0}^1,\dots, qx_{m_0}^n)\in V\}h^{-1} $$

By Lemma \ref{20260620}
\begin{align*}
&\overline{D}_{\mathcal{F}}(\{g\in G: (g(hx_{m_0}^1),\dots, h(hx_{m_0}^n))\in V\} ) \\
&=\overline{D}_{\mathcal{F}}(\{q\in G: (qx_{m_0}^1,\dots, qx_{m_0}^n)\in V\}h^{-1} )\\
&=\overline{D}_{\mathcal{F}}(\{q\in G: (qx_{m_0}^1,\dots, qx_{m_0}^n)\in V\} )
 > \frac{\delta}{N_1N_2\cdots N_{l}}.
\end{align*}

So we have $(z_1,\dots,z_n)\in MS_{n,\pi}^e(X, G)$, completing the proof.

(2)$\Rightarrow$(1)
Let $(x_1,\dots,x_n)\in MS_{n,\pi}^{e}(X,G)$ and $0<\delta_1=\min_{1 \leq i \neq j \leq m} d(x_{i}, x_{j})$.
Then there exists an open neighborhood $\prod_{i=1}^n U_i$ of $(x_1,\dots,x_n)$
such that $\min_{1\le i\neq j\le n}d(U_i,U_j)=\frac{1}{2}\delta_1$.
Then
there exists $\delta_2>0$ such that for any non-empty open set $U$ there are $y_1,y_2,\dots,y_n\in U$ with $\pi(y_1)=\pi(y_2)=\cdots =\pi(y_n)$
and a subset $F$ of $\Z_+$ with $\overline{D}_{\mathcal{F}}(F)>\delta_2$ such that $g y_i \in U_i$ for all $i=1,2,\dots,n$ and $g\in G$.
Let $\delta=\min\{\frac{1}{2}\delta_1,\delta_2\}$.
Then $\min_{1\le i\neq j\le n}d(g y_i,g y_j)\ge\min_{1\le i\neq j\le n}d(U_i,U_j)> \delta$ for all $g\in G$
with $\overline{D}_{\mathcal{F}}(F)>\delta$.
By Proposition \ref{prop:eq4} $\pi$ is relative mean $n$-sensitive.

\end{proof}

Next we study the relation between relative entropy tuples and relative mean sensitive tuples.

\begin{lem}\label{lem:IT-properties}
Let $\pi:(X,G)\ra (Y,G)$ be a factor map between TDSs and $2\le n\in\N$.  We have the following characterizations.
\begin{enumerate}
\item\label{lem:IT-properties:1} \cite[Proposition 13.5]{DZ15}  $h_{\text{top}}(G| \pi)>0$ if and only if $E^e_{n,\pi}(X,G)\neq\emptyset$ for every $2\le n\in\N$.

\item\label{lem:IT-properties:2} \cite[Theorem 13.7]{DZ15} There is a $\nu\in M(X,G)$ such that $E_{n,\pi}^\nu(X,G)=E_{n,\pi}(X,G)$.

\end{enumerate}

\end{lem}

\begin{thm}\label{thm:minimal-positive-entropy}
Let $\pi:(X,G)\ra (Y,G)$ be a factor map between minimal TDSs and $2\le n\in\N$. Then
$$
E_{n,\pi}(X,G)\subset MS_{n,\pi}(X,G).
$$
\end{thm}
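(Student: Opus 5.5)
We plan to reduce the topological statement to the measure-theoretic Theorem \ref{thm:entr-tuple--ms-tuple}, by passing through an invariant measure whose entropy tuples exhaust the topological ones and then through its ergodic components. If $h_{\text{top}}(G|\pi)=0$, then $E_{n,\pi}(X,G)=\emptyset$ by Lemma \ref{lem:IT-properties}(\ref{lem:IT-properties:1}) (together with the fact that any relative entropy $n$-tuple forces positive relative entropy of an admissible cover), and there is nothing to prove. Otherwise, Lemma \ref{lem:IT-properties}(\ref{lem:IT-properties:2}) provides $\nu\in M(X,G)$ with $E_{n,\pi}^\nu(X,G)=E_{n,\pi}(X,G)$.

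Write $\nu=\int_\Omega\theta\,\mathrm{d}m(\theta)$ for the ergodic decomposition. Theorem \ref{et-decom}(2) shows that
\[
E_{n,\pi}(X,G)=\overline{\cup\{E_{n,\pi}^\theta(X,G):\theta\in\Omega'\}}\setminus\Delta_n(X),
\]
where each $\theta\in\Omega'$ is ergodic. By Theorem \ref{thm:entr-tuple--ms-tuple}, $E_{n,\pi}^\theta(X,G)\subset MS_{n,\pi}^\theta(X,G)$ for every such $\theta$. Two steps then remain:
\begin{enumerate}
\item show that $MS_{n,\pi}^\theta(X,G)\subset MS_{n,\pi}(X,G)$ for an ergodic $\theta$ on a minimal system;
\item show that $MS_{n,\pi}(X,G)\cup\Delta_n(X)$ is closed, so that closure is harmless.
\end{enumerate}

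For step (1), minimality of $X$ gives $\supp(\theta)=X$, so every non-empty open set $U$ satisfies $\theta(U)>0$. We may therefore take $A=U$ in the definition of a relative mean $\theta$-$n$-sensitive tuple. This yields $(y_1,\dots,y_n)\in U^{(n)}\cap R_{n,\pi}$ and a set $F$ with $\overline{D}_{\mathcal F}(F)>\delta$ such that $gy_i\in U_i$ for all $g\in F$. The constant $\delta$ depends only on the neighbourhoods $U_i$ and not on $U$, which is exactly the definition of a relative mean $n$-sensitive tuple.

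For step (2), let $(x_1,\dots,x_n)\notin\Delta_n(X)$ be a limit of tuples in $MS_{n,\pi}(X,G)$, and fix open neighbourhoods $U_i\ni x_i$. Some tuple $(x'_i)$ of $MS_{n,\pi}(X,G)$ has $x'_i\in U_i$ for each $i$. Since each $U_i$ is then also a neighbourhood of $x'_i$, the $\delta$ supplied for $(x'_i)$ works for $(x_i)$. Hence the closure points lying off the diagonal belong to $MS_{n,\pi}(X,G)$. Combining the two steps,
\[
E_{n,\pi}(X,G)\subset\overline{\cup_\theta MS_{n,\pi}(X,G)}\setminus\Delta_n(X)\subset MS_{n,\pi}(X,G).
\]

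The main obstacle is the measure-theoretic bookkeeping, namely ensuring that the components $\theta$ used in Theorem \ref{et-decom}(2) can be taken ergodic with full support. Full support comes for free from minimality. Ergodicity holds for $m$-a.e.\ $\theta$ by the decomposition, so $\Omega'$ can be shrunk by a null set without affecting the positivity statement in Theorem \ref{et-decom}(2). Minimality of $Y$ is not needed beyond its role as a factor of $X$.
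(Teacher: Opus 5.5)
Your proposal is correct and follows the paper's proof essentially step for step. You choose $\nu$ with $E_{n,\pi}^\nu(X,G)=E_{n,\pi}(X,G)$ via Lemma~\ref{lem:IT-properties}(2), pass to ergodic components, apply Theorem~\ref{thm:entr-tuple--ms-tuple}, use full support from minimality to get $MS_{n,\pi}^\theta(X,G)\subset MS_{n,\pi}(X,G)$, and finish with Theorem~\ref{et-decom} and the closedness of $MS_{n,\pi}(X,G)\cup\Delta_n(X)$. Your explicit checks of that closedness and of the almost-everywhere ergodicity bookkeeping supply details the paper only asserts; the preliminary case $h_{\text{top}}(G|\pi)=0$ is harmless but unnecessary.
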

\begin{proof}
By Lemma \ref{lem:IT-properties} \eqref{lem:IT-properties:2} there is a $\mu\in M(X,G)$
such that $E_{n,\pi}^\mu(X,G)=E_{n,\pi}(X,G)$. Let $\mu=\int_{\Omega}\theta \textrm{d} m(\theta)$ be the ergodic decomposition of $\mu$. For each $\theta\in \Omega$,
by Theorem \ref{thm:entr-tuple--ms-tuple} $E_{n,\pi}^{\theta}(X,G)\subset MS_{n,\pi}^{\theta}(X,G)$.
Since $X$ is minimal, then every ergodic measure on $X$ has full support.
This implies that $MS_{n,\pi}^{\theta}(X,G)\subset MS_{n,\pi}(X,G)$  and so
$$
\cup_{\theta\in \Omega}E_{n,\pi}^{\theta}(X,G)\subset MS_{n,\pi}(X,G).
$$
By Theorem \ref{et-decom}
there is $\Omega'\subset \Omega$ with $m(\Omega')=1$ such that
$$
\overline{\cup_{\theta\in \Omega'}E_{n,\pi}^{\theta}(X,G)}\setminus \Delta_n(X)=E_{n,\pi}^\mu(X,G).
$$
Observe that $MS_{n,\pi}(X,G)\cup \Delta_n(X)$ is a closed  subset of $X^{(n)}$. Hence we have
$$
E_{n,\pi}(X,G)=E_{n,\pi}^\mu(X,G)=\overline{\cup_{\theta\in \Omega'}E_{n,\pi}^{\theta}(X,G)}\setminus\Delta_n(X)\subset MS_{n,\pi}(X,G),
$$
%and so $MS_n(X,T)=X^{(n)}\setminus \Delta_n^\prime(X)$,
completing the proof.
\end{proof}

\begin{cor}\label{cor:minimal-positive-entropy}
Let $\pi:(X,G)\ra (Y,G)$ be a factor map between minimal TDSs with $h_{\text{top}}(G| \pi)>0$.
Then $\pi$ is relative mean $n$-sensitive for every $2\le n\in\N$.
\end{cor}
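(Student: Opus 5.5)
The plan is to combine Lemma \ref{lem:IT-properties}, Theorem \ref{thm:minimal-positive-entropy} and Proposition \ref{thm:ms=ms-tuple}, in the same way the corollary of Section \ref{sect:MS-tuple:measure} was derived.

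\emph{Step 1: an essential entropy tuple.} Fix $2\le n\in\N$. Since $h_{\text{top}}(G|\pi)>0$, Lemma \ref{lem:IT-properties} \eqref{lem:IT-properties:1} gives
\[
E^e_{n,\pi}(X,G)\neq\emptyset .
\]
Pick $(x_1,\dots,x_n)\in E^e_{n,\pi}(X,G)$. Then $x_i\neq x_j$ for all $1\le i\neq j\le n$.

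\emph{Step 2: it is an essential sensitive tuple.} Because $X$ and $Y$ are minimal, Theorem \ref{thm:minimal-positive-entropy} applies and gives $(x_1,\dots,x_n)\in MS_{n,\pi}(X,G)$. Essentiality is only a condition on the coordinates being pairwise distinct, so this tuple already lies in $MS^e_{n,\pi}(X,G)$. Hence $MS^e_{n,\pi}(X,G)\neq\emptyset$.

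\emph{Step 3: conclude sensitivity.} A minimal system is transitive, so $\pi$ is a factor map between transitive TDSs. The implication (2)$\Rightarrow$(1) of Proposition \ref{thm:ms=ms-tuple} then shows that $\pi$ is relative mean $n$-sensitive with respect to $\mathcal{F}$. Since $n$ was arbitrary, the corollary follows.

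The argument is essentially bookkeeping, and the only real content sits in Theorem \ref{thm:minimal-positive-entropy}. The point I would check is that the implication used in Step 3 needs only the existence of one essential tuple. Tracing its proof, it uses separated neighbourhoods $U_i$ of the $x_i$ and the density bound $\delta_2$ from the definition of $MS_{n,\pi}$. It does not use transitivity, so it applies verbatim here.
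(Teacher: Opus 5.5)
Your proposal is correct and follows the paper's own proof exactly: Lemma \ref{lem:IT-properties}\eqref{lem:IT-properties:1} gives an essential relative entropy $n$-tuple, Theorem \ref{thm:minimal-positive-entropy} puts it in $MS^e_{n,\pi}(X,G)$, and the implication (2)$\Rightarrow$(1) of Proposition \ref{thm:ms=ms-tuple} gives relative mean $n$-sensitivity. Your side remarks are accurate: essentiality is just pairwise distinctness of coordinates, minimality supplies the transitivity hypothesis, and that implication does not actually use transitivity.
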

\begin{proof}
Since $h_{\text{top}}(G| \pi)>0$,
by Lemma \ref{lem:IT-properties} \eqref{lem:IT-properties:1}  $E_{n,\pi}^e(X,G)\not=\emptyset$ for any $2\le n\in\N$.
By Theorem \ref{thm:minimal-positive-entropy} $MS_{n,\pi}^e(X,G)\not=\emptyset$.
By Proposition \ref{thm:ms=ms-tuple} $\pi$ is relative mean $n$-sensitive for every $2\le n\in\N$.

\end{proof}

\end{document}